\title[Dobrushin's ergodicity coefficient for Markov operators]{Dobrushin's ergodicity coefficient for Markov operators on cones}
\author{St\'ephane Gaubert}
\address{INRIA and CMAP UMR 7641 CNRS\\
         \'Ecole Polytechnique\\
          91128 Palaiseau C\'edex, France}
\email[]{Stephane.Gaubert@inria.fr}
\thanks{The authors were partially supported by the PGMO Programme of FMJH and EDF, and by the programme ``Ing{\'e}nierie Num{\'e}rique \& S{\'e}curit{\'e}'' of the French National Agency of Research, project ``MALTHY'', number ANR-13-INSE-0003.}
\thanks{An announcement of some of the present results has appeared in the Proceedings of the ECC'13 (European Control Conference), July 17-18 2013, Zurich.}
\author{Zheng Qu}
\address{School of Mathematics\\ University of Edinburgh \\Edinburgh, EH9 3FD, UK}
\email[]{zheng.qu@ed.ac.uk}
\keywords{Markov operator, Dobrushin's ergodicity coefficient, ordered linear space, invariant measure, contraction ratio, consensus, noncommutative Markov chain, quantum channel, zero error capacity, rank one matrix}
\date{October 4, 2014}
\begin{document}

\begin{abstract}
Doeblin and Dobrushin characterized the contraction rate of Markov operators with respect the total variation norm. We generalize their results by
giving an explicit formula for the contraction rate of a Markov operator over a cone in terms of pairs of extreme points with disjoint support in a set of abstract probability measures. By duality, we derive a characterization of the contraction
rate of consensus dynamics over a cone with respect to Hopf's oscillation
seminorm (the infinitesimal seminorm associated with Hilbert's projective metric). 
We apply these results to Kraus maps
(noncommutative Markov chains, representing quantum channels),
and characterize the ultimate contraction of the map
in terms of the existence of a rank one matrix
in a certain subspace.
\end{abstract}

\maketitle

\section{Introduction}
A basic result in the theory of Markov chains, due to Doeblin and Dobrushin,
is the characterization of the
contraction rate of a Markov operator 
acting on a space of measures equipped 
with the total variation norm. 
Consider in particular a finite Markov chain
with transition (row stochastic) matrix $A \in \R^{n\times n}$.
The associated Markov operator is the map $\nu\mapsto \nu A$
from $\R^n$ to $\R^n$, where the elements of $\R^n$ are thought of as row vectors.
The set of probability measures can be identified to the
standard simplex $\mathcal{P}:= \{\nu\in \R^n\mid \nu_i \geq 0 \text{ for } 1\leq i\leq n,\; \sum_{1\leq i\leq n}\nu_i=1\}$, and the total variation norm is nothing but one half of the $\ell_1$ norm $\|\cdot\|_1$ on $\R^n$. 
We are interested in the Lipschitz constant of the map $\nu \mapsto \nu A, \; \mathcal{P}\to \mathcal{P}$ with respect to the total variation norm,  
or equivalently, in 
the operator norm of the map $\nu \mapsto \nu A$ on 
the subspace of vectors of $\R^n$ with zero sum, equipped
with the same norm,
\begin{align*}
\delta(A) &:=
\max_{\nu,\pi\in \mathcal{P}, \; \nu\neq \pi } \frac{\|\nu A-\pi A\|_1}{\|\nu -\pi\|_1}
= \max_{\scriptstyle \nu \in \R^n, \; \|\nu\|_1=1 \atop  \scriptstyle \sum_{1\leq i\leq n}\nu_i =0} \|\nu A\|_1 \enspace.
\end{align*}
The Doeblin-Dobrushin characterization reads
\begin{align}
\delta(A)&=  \frac 1 2 \max_{i<j} \sum_{1\leq s\leq n}|A_{is}-A_{js}| \enspace ,\label{e-dd}
\\
&= 1-\min_{i<j}\sum_{s=1}^n \min(A_{is},A_{js}) \enspace. \label{e-dd2}
\end{align}
The expression of $\delta(A)$ given by~\eqref{e-dd} is known
as {\em Doeblin contraction coefficient}, see~\cite{PeresLevin}, whereas the
second expression, in~\eqref{e-dd2}, 
is known as 
{\em Dobrushin ergodicity coefficient}~\cite{Dobrushin56}.
The latter is often used to show that $\delta(A)<1$. This holds
in particular if there is a {\em Doeblin state}, i.e., a distinguished
state $t$ such that $A_{it}\geq \epsilon>0$ for all $1\leq i\leq n$.
Then, $\delta(A)\leq 1-\epsilon$.

A dual characterization of $\delta(A)$ has been used
in linear consensus theory. The latter is motivated
by communication networks, control theory and parallel computation~\cite{Hirsch89,Bertsekas89,Boydrand06,Moreau05,Blondel05convergencein,OlshevskyTsitsiklis,AngeliBliman}. It studies dynamics of the
form
\begin{align}\label{a-xkintroAk}
x_{k+1}=A_kx_k,\enspace k=1,2,\dots,\qquad x_k\in \R^n
\end{align}
where $A_k$ are row stochastic matrices, acting
on column vectors. One looks for conditions which guarantee
the convergence of $x_k$ to a {\em consensus state}, i.e.,
to a scalar multiple of the unit vector $e$ of $\R^n$. To this end,
one considers the following seminorm,
sometimes called {\em diameter} or {\em Tsitsiklis' Lyapunov function}~\cite{Tsitsiklis86}
\[
\Delta(x)=\max_{1\leq i,j\leq n} (x_i-x_j) \enspace ,
\]
for all $x,y\in \R^n$. It is known~\cite{SpeilmanMorse} that 
\begin{align}
\delta(A) = \max_{x \not \in \R e} \frac{\Delta(Ax)}{\Delta(x)}
\label{e-tsitsiklis}
\end{align}
so that the Doeblin-Dobrushin ergodicity coefficients
coincides with the one-step contraction rate of the consensus dynamics
with respect to the diameter seminorm. We note that the
same seminorm $\Delta$ is a fundamental tool in Perron-Frobenius
theory, where it is called {\em Hopf's oscillation}~\cite{hopf,Bushell73} 
or {\em Hilbert's seminorm}~\cite{arxiv1}. 

In this paper, we extend the Doeblin-Dobrushin theorem, as well
as the dual characterization~\eqref{e-tsitsiklis}, to 
Markov operators over cones. We consider a bounded linear self-map $T$ 
of a Banach space $\cX$, equipped with
a normal cone $\C \subset \cX$, and a {\em unit} element 
$\unit$ belonging to the interior of $\C$.
We say that $T$ is an {\em abstract Markov operator}
if $T(\C)\subset \C$ and $T(\unit)=\unit$.
The {\em Hopf oscillation} in
the space $\cX$ is the seminorm defined by
$$
x\mapsto \hilbert{x}{\unit}:=\inf\{\beta-\alpha: \alpha \unit \preceq x\preceq \beta \unit\} \enspace ,
$$
where $\preceq$ denotes the partial order induced by $\C$.
Our main result reads:
\begin{theo}[Contraction rate in Hopf's oscillation seminorm]\label{th-intro1}
 Let $T:\cX \to \cX$ 
be an abstract Markov operator.
Then
\begin{align*}
\sup_{\substack{z\in \cX\\ \hilbert{z}{\unit}\neq 0}}
\frac{\hilbert{T(z)}{\unit}}{\hilbert{z}{\unit}}
 &= \sup_{\substack{\nu,\pi\in \pP(\unit)\\ \nu\neq \pi}} \frac{\othernorm{T^\star(\nu-\pi)}_T^\star}{\othernorm{\nu-\pi}_{T}^\star}
\\
&=\frac 1 2 \sup_{\substack{\nu,\pi \in \operatorname{extr} \cP(\unit)\\ \nu\perp\pi}} 
\|T^\star(\nu)-T^\star(\pi)\|_T^\star\\
&= 1-\inf_{\substack{\nu,\pi \in \operatorname{extr}\cP(\unit)\\ \nu\perp\pi}}\inf_{x\in[0, \unit]} \<\pi,T(x)>+\<\nu, T(\unit-x)> \enspace .
\end{align*}
\end{theo}
This theorem follows from Theorems~\ref{th-opnorm} and 
~\ref{theo-markTH} below. 
The notations and notions used here are
detailed in Section~\ref{sec-operatornormch}.
In particular, $T^\star$ denotes the adjoint of $T$ and we make use of the following norm, which we call {\em Thompson's norm},
\[ \|z\|_{T} = \inf\{\alpha>0:\, -\alpha \unit\preceq z\preceq \alpha\unit\}\]
on the space $\cX$, and denote by $\|\cdot\|_T^\star$ the dual norm. The notation
$\mathcal{P}(\unit)=\{\mu\in \C^\star :\, \<\mu,\unit> =1\}$ refers to the
abstract {\em simplex} of the dual Banach space $\cX^\star$ of $\cX$,
where $\C^\star$ is the  dual cone of $\C$;
$\extr$ denotes the extreme points of a set; $\bot$ denotes a certain
{\em disjointness} relation, which will be seen to generalize
the condition that two measures have disjoint supports.

Taking $\cX=\R^n$, $\C$ the standard positive cone $\R^n_+$,
and $\unit$ the standard unit vector $(1,\dots,1)^\top$, we recover from Theorem~\ref{th-intro1}
the Doeblin-Dobrushin
characterization~\eqref{e-dd},\eqref{e-dd2}, as well as
its dual form in linear consensus theory~\eqref{e-tsitsiklis}.

Results related to Theorem~\ref{th-intro1} have previously
appeared. In a finite dimensional setting, 
Reeb, Kastoryano, and Wolf
~\cite{ReebWolf2011} gave a characterization analogous 
to the second equality of the above theorem without the disjointness condition. We refer to Remark~\ref{rem-comReeb} for a comparison. Also, Mukhamedov
gave in~\cite{mukhamedov},
in the setting of von Neumann algebras, a
characterization similar to the same equality, still without the disjointness
condition. He established some other properties
of the ergodicity coefficient,
and derived
ergodic type theorems for nonhomogeneous Markov chains.

Several motivations lead to consider Markov operators over
cones which differ from the standard positive cone of $\R^n$. 

First, Sepulchre, Sarlette, and Rouchon~\cite{sepulchre} and independently, 
Reeb, Kastoryano and Wolf~\cite{ReebWolf2011} ,
have shown
that tools from Perron-Frobenius theory (specially contraction
results in different metrics over cones) provide a unifying general
approach to address issues from quantum information and control.
Here, quantum channels are represented 
by self-maps $T$ of the cone of positive semidefinite matrices, 
preserving the Loewner order, and the identity matrix. Relations with
classical ``consensus'' theory were also addressed in~\cite{sepulchre}.
We derive further results,
showing that Theorem~\ref{th-intro1} leads to a
%
noncommutative analogue of Dobrushin's ergodicity coefficient (see Corollary~\ref{coro-noncommDobr}):
$$
1-
\displaystyle\min_{\substack{X=(x_1,\dots,x_n)\\XX^*=I_n}}\min_{\substack{u,v:u^*v=0\\u^*u=v^*v=1}} \sum_{i=1}^n \min\{u^*T(x_ix_i^*)u,v^*T(x_ix_i^*)v\}\enspace.
$$
Then, we use the above formula to show that the convergence of a noncommutative consensus system or equivalently the ergodicity
of a noncommutative Markov chain can be characterized by the existence of a rank one matrix in  certain subspace of matrices (Theorem~\ref{th-globalconv}
and~\ref{th-markerg}). Also, it follows from these results that 
an operator $T$ representing a quantum channel
has a contraction rate of $1$ (absence of contraction)
with respect to Hopf's oscillation if and only if
there exists two distinguishable pure states, 
i.e., a quantum clique of cardinality $2$~\cite{Shor},
or equivalently if the quantum channel has a positive zero-error
capacity~\cite{Medeiros}.

We also derive as a direct illustration
a convergence result (geometric convergence
of the iterates of the operator to a rank one operator, or
geometric convergence to a ``consensus state'') 
in Theorem~\ref{th-ex-con}.
Actually, the present contraction results are useful more generally
when considering iterates of random contractions. Then,
almost sure convergence to a consensus state
can be obtained by adapting ideas of Bougerol~\cite{Bougerol93},
see the discussion in~\S\ref{sec-consensusoperator} below.
We limited our convergence treatment here to simple illustrations
of our results: we note that the question of proving  ``weak ergodicity
results'' in their best generality has been thoroughly studied,
we refer the reader to the work of Mukhamedov~\cite{mukhamedov},
and to the references therein.


Our second and original motivation arises from
{\em non-linear}, rather than linear, Perron-Frobenius theory, i.e.,
from the study of non-linear maps over cones. In this setting,
the interior of a cone $\C$ is equipped with {\em Hilbert's projective metric},
defined by:
$$
d_H(x,y):=\log\inf\{\frac{\beta}{\alpha}:\; \alpha,\beta>0,\; \alpha x\preceq y\preceq \beta x\}.
$$
Birkhoff~\cite{birkhoff57} characterized the
contraction ratio with respect to $d_H$ of a linear map $T$ preserving the interior $\C^0$ of the cone $\C$, 
$$
\sup_{x,y\in \C^0}\frac{d_H(Tx,Ty)}{d_H(x,y)}=\tanh(\frac{\Diam T(\C^0)}{4}),
$$
$$\Diam T(\C^0):=\sup_{x,y\in \C^0} d_H(Tx,Ty)\enspace.
$$
This fundamental result, which implies that a linear map sending the cone $\C$ into its interior is a strict contraction in Hilbert's metric, can be used
to derive the 
  Perron-Frobenius theorem
from
the Banach contraction mapping theorem, see~\cite{Bushell73,Kohlberg82,EvesonNuss95} for more information.

The generalization of Birkhoff's theorem to non-linear
maps, and in particular, the computation of the Lipschitz
constant of nonlinear maps with respect to Hilbert's
projective metric, has important applications
(including population dynamics), and it has motivated several works,
specially the one of Nussbaum~\cite{nussbaum94},
who observed
that $d_H$ is the weak Finsler metric obtained when taking $\hilbert{\cdot}{\unit}$ to be the infinitesimal distance at point $\unit$. In other words,
\[
d_H(x,y) = \inf_{\gamma} \int_0^1 \hilbert{\dot{\gamma}(s)}{\gamma(s)} ds 
\]
where the infimum is taken over piecewise $C^1$ paths $\gamma:[0,1]\to \C^0$ 
such that $\gamma(0)=x$ and $\gamma(1)=y$. 
He deduced that the contraction ratio,
with respect to Hilbert's projective metric,
 of a nonlinear map $f
:\C^0\rightarrow \C^0$ that is positively homogeneous of degree $1$
(i.e.\ $f(\lambda x)=\lambda f(x)$ for all $\lambda>0$), 
restricted to a geodesically convex subset $U\subset \C^0$,
can be expressed in terms of the Lipschitz constants of the linear maps $Df(x)$ with respect to a family of Hopf's oscillation seminorms: 
\begin{align}
\sup_{x,y\in U} \frac{d_H(f(x),f(y))}{d_H(x,y)}=\sup_{x\in U}
\kappa(x)\label{e-nussbaum}
\end{align}
where
\begin{align}
\kappa(x):=
\sup_{\substack{z\in \cX \\ \hilbert{z}{x}\neq 0 }} \frac{\hilbert{Df(x)z}{f(x)}}{\hilbert{z}{x}} \enspace .
\label{e-deltaa}
\end{align}
We recognize in $\kappa(x)$ a variant of the Doeblin-Dobrushin coefficient $\delta(A)$, in which the domain and range of $A$ are equipped
with different unit elements, namely $x$ and $f(x)$. Our characterization
carries over to this case. In particular, Theorem~\ref{th-opnorm} below
gives an explicit formula for $\kappa(x)$,
which, in combination with Nussbaum's characterization~\eqref{e-nussbaum}
allows one to compute the contraction rate of a non-linear map
with respect to Hilbert's projective metric.

\section{Thompson's norm and Hilbert's seminorm }\label{sec-pre}
We start by some preliminary results.
  Throughout the paper, $(\cX, \othernorm{\cdot})$ is a real Banach space. Denote by $\cX^\star$  the dual space of $\cX$. For any $x\in \cX$ and $q\in \cX^\star$, denote by $\<q,x>$ the value of $q(x)$.
Let $\C\subset \cX$ be a closed pointed convex cone
with nonempty interior $\C_0$
, in particular, $\alpha \C\subset \C$ for $\alpha \in \R^{+}$, $\C+\C\subset \C$ and $\C\cap (-\C)=0$. 
 The partial order $\preceq $ 
induced by $\C$ on $\cX$ is defined as follows:
\[
x\preceq y \Leftrightarrow y-x \in \C  \enspace.
\]
For $x\preceq y$ we define the order interval:
$$
[x,y]:=\{z\in \cX|x\preceq z\preceq y\}.
$$
For $x\in \cX$ and $y\in \C_0$, following~\cite{nussbaum88}, we define
\begin{align}\label{a-eq4}
\begin{array}{l}
M(x/y):=\inf\{t\in \R:x\preceq ty\}\\
m(x/y):=\sup\{t\in \R:x\succeq ty\}
\end{array}
\end{align}
Observe
that since $y\in\mathcal{C}_0$, and since $\mathcal{C}$ is closed
and pointed, the two sets in~\eqref{a-eq4} are non-empty, closed, and bounded
from below and from above, respectively. In particular, $m$ and $M$ take
finite values.

For $x\in \cX$ and $y\in \mathcal{C}_0$, we call \firstdef{oscillation}~\cite{Bushell73} the difference between $M(x/y)$ and $m(x/y)$:
$$
\hilbert{x}{y}:=M(x/y)-m(x/y).
$$
Let $\unit$ 
denote a distinguished element in $\C_0$, which we shall
call a {\em unit}. For $x\in \cX$, define
\[
\|x\|_{T}:=\max( M(x/\unit), -m(x/\unit))
\]\index{norm!Thompson's norm}\index{seminorm!Hilbert's seminorm}
which we call {\em Thompson's norm}, with respect to the element $\unit$, and
\[
\|x\|_H:= \hilbert{x}{\unit}
\]
which we call \firstdef{Hilbert's seminorm} with respect to the element $\unit$.   \index{unit element}
\begin{rem}These terminologies are motivated by the fact that Thompson's part metric and Hilbert's projective metric are Finsler metrics 
for which the infinitesimal distances at the point $\unit\in \C^ 0$ are respectively given by $\othernorm{\cdot}_T$ and $\othernorm{\cdot}_H$,
see~\cite{nussbaum94}.
The seminorm $\|\cdot\|_H$ is also called Hopf's oscillation seminorm~\cite{Bushell73}.  Besides, it is clear that the unit $\unit$ is an \firstdef{order unit}  and Thompson's norm $\|\cdot\|_{T}$ is the corresponding \firstdef{order unit norm}, see~\cite{Ellis, Alfsen,Nagel74}.
\end{rem}

We assume that the cone $\cX$ is  normal, that is, there is a constant $K>0$ such that
$$
0\preceq x\preceq y\Rightarrow \othernorm{x}\leq K\othernorm{y}.
$$
It is known that under this assumption the two norms $\othernorm{\cdot}$ and $\othernorm{\cdot}_T$ are equivalent, see~\cite{nussbaum94}. 
Therefore the space $\cX$ equipped with the norm $\othernorm{\cdot}_T$ is an order unit Banach space. Since Thompson's norm $\othernorm{\cdot}_T$ is defined
with respect to a particular element $\unit$, we write $(\cX,\unit,\othernorm{\cdot}_T)$ instead of $(\cX,\othernorm{\cdot}_T)$.
\index{metric!Finsler metric}

\begin{example}\label{rem1}
 We consider the finite dimensional vector space $\cX=\R^n$, the standard positive cone $\C=\R^n_+$ and the unit vector $\unit=\mathbf 1:=(1,\dots,1)^T$. 
It can be checked that 
 Thompson's norm with respect to $\mathbf 1$ is nothing but the sup norm
$$
\othernorm{x}_T=\max_i |x_i|=\othernorm{x}_{\infty},$$
whereas Hilbert's seminorm with respect to $\mathbf 1$ is the so called\index{diameter}
{\em diameter}:
$$
\othernorm{x}_H=\max_{1\leq i,j\leq n} (x_i-x_j)
=\Delta(x).
$$
\end{example}

\begin{example}\label{ex-SymnHilbert}
 Let $\cX=\sym_n$, the space of Hermitian matrices of dimension $n$ and $\C=\sym_n^+$, the cone of positive semidefinite matrices. Let the identity matrix  $I_n$ be  the unit element: $\unit=I_n$. Then Thompson's norm with respect to $I_n$ is nothing but the sup norm of the spectrum
of $X$, i.e.,
$$
\othernorm{X}_T=\max_{1\leq i\leq n} \lambda_i(X)=\othernorm{\lambda(X)}_{\infty},
$$
where $\lambda(X):=(\lambda_1(X),\dots,\lambda_n(X))$, is the vector of ordered eigenvalues of $X$, counted
with multiplicities,
whereas Hilbert's seminorm with respect to $I_n$ is the diameter of the spectrum:
$$
\othernorm{X}_H=\max_{1\leq i,j\leq n} (\lambda_i(X)-\lambda_j(X))=\Delta(\lambda(X)).
$$
\end{example}


\section{Abstract simplex in the dual space and dual unit ball}\label{sec-abstract simplex}
We denote by $(\cX^\star,\unit,\othernorm{\cdot}_T^\star)$ the dual space of $(\cX,\unit,\othernorm{\cdot}_T)$ where
 the dual norm $\othernorm{\cdot}_T^\star$ of a continuous linear functional $z \in \cX^\star$ is defined by:
$$\othernorm{z}_T^\star:=\sup_{\othernorm{x}_T=1} \<z,x>\enspace.$$
The \firstdef{abstract simplex} in the dual space is defined by:\index{abstract simplex}
\begin{align}\label{a-simp}\cP(\unit):=\{\mu \in \C^\star\mid \<\mu, \unit> =1\}\enspace,\end{align}
where $\C^\star$ is the {\em dual cone} of $\C$: $$\C^\star=\{z\in \cX^\star:\braket{z, x}\geq0 \enspace \forall x\in \C\}\enspace.$$ 


\begin{rem}\label{rem-rn1}
For  the standard positive cone (Example~\ref{rem1}, $\cX=\R^n$, $\C=\R^n_+$ and $\unit=\mathbf 1$), the dual space $\cX^\star$ is $\cX=\R^n$ itself and the dual norm $\othernorm{\cdot}_T^\star$ is the $\ell_1$ norm:
$$
\othernorm{x}_T^\star=\sum_i |x_i|=\othernorm{x}_1.
$$ 
The abstract simplex $\cP(\mathbf 1)$ is the standard simplex in $\R^n$:
$$
\cP(\mathbf 1)=\{\nu\in \R^n_+: \sum_i \nu_i=1\},
$$
i.e., the set of probability measures on the discrete space $\{1,\dots,n\}$.
\end{rem}
\begin{rem}\label{rem-sym1}
For the cone of semidefinite matrices (Example~\ref{ex-SymnHilbert}, $\cX=\sym_n$, $\C=\sym_n^+$ and $\unit=I_n$), the dual space $\cX^\star$ is $\cX=\sym_n$ itself and the dual norm $\othernorm{\cdot}_T^\star$ is the trace norm:
$$
\othernorm{X}_T^\star=\sum_{1\leq i\leq n} |\lambda_i(X)|=\othernorm{X}_1,\quad X\in \sym_n
$$ 
The simplex $\cP(I_n)$  is the set of positive semidefinite matrices with trace $1$:
$$
\cP(I_n)=\{\rho\in \sym^n_+: \trace(\rho)=1\}.
$$
The elements of this set are called {\em density matrices} in quantum\index{matrix!density matrix}
physics. They are thought of as noncommutative analogues
of probability measures.
\end{rem}

By the duality between order unit and base normed spaces~\cite{Ellis}, the space $(\cX^\star,\unit,\othernorm{\cdot}_T^\star)$ is a base normed space. The abstract simplex $\cP(\unit)$ coincides with  the \firstdef{base} and the dual norm $\othernorm{\cdot}_T^\star$ with the \firstdef{base norm}.  
We denote by $B_T^ \star(\unit)$ the  dual unit ball:
$$
B_T^ \star(\unit)=\{x\in \cX^ \star\mid \othernorm{x}_T^\star\leq 1\}\enspace.
$$
We denote by $\operatorname{conv}(S)$ the convex hull of a set $S$.
The next lemma relates the abstract simplex $\cP(\unit)$ to the dual unit ball $B_T^\star(\unit)$. The proof can be found in~\cite{Ellis,Alfsen}. 
\begin{lemma}[\cite{Ellis}]\label{l-BTstar}
The dual unit ball $B_T^\star(\unit)$
of the space $(\cX^\star,\unit,\othernorm{\cdot}_T^\star)$, satisfies
\begin{align}
B_T^\star(\unit) = \operatorname{conv}(\cP(\unit)\cup -\cP(\unit))\enspace.
\label{e-bstar}
\end{align}
\end{lemma}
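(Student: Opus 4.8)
The plan is to prove the identity by double inclusion, the reverse inclusion $B_T^\star(\unit)\subseteq \operatorname{conv}(\cP(\unit)\cup -\cP(\unit))$ being the substantial one. Throughout I write $K:=\operatorname{conv}(\cP(\unit)\cup -\cP(\unit))$, and I record at the outset that the Thompson unit ball in $\cX$ is the order interval, $\{x:\othernorm{x}_T\le 1\}=[-\unit,\unit]$, which is immediate from $\othernorm{x}_T=\inf\{\alpha>0:-\alpha\unit\preceq x\preceq \alpha\unit\}$. For the easy inclusion $K\subseteq B_T^\star(\unit)$ I first check $\cP(\unit)\subseteq B_T^\star(\unit)$: if $\mu\in\cP(\unit)$ and $-\unit\preceq x\preceq \unit$, then $\unit\pm x\in\C$ and positivity of $\mu$ on $\C$ give $\<\mu,x>\le\<\mu,\unit>=1$ and $\<\mu,x>\ge -1$, whence $|\<\mu,x>|\le 1$ over the Thompson unit ball, i.e.\ $\othernorm{\mu}_T^\star\le 1$. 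Since $B_T^\star(\unit)$ is convex and symmetric, it contains $-\cP(\unit)$ and therefore $K$.

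The crux is the auxiliary identity
\[
\othernorm{x}_T=\sup_{\mu\in\cP(\unit)}|\<\mu,x>|=\sup_{\mu\in K}\<\mu,x>,\qquad x\in\cX.
\]
The inequality $\ge$ restates the computation just made. For $\le$, suppose $\sup_{\mu\in\cP(\unit)}|\<\mu,x>|=\beta$; then $\<\mu,\beta\unit- x>\ge 0$ and $\<\mu,\beta\unit+x>\ge 0$ for every $\mu\in\cP(\unit)$, hence for every $\mu\in\C^\star$ after rescaling. Because $\C$ is closed, the bipolar relation $\C=\C^{\star\star}$ holds, so $\beta\unit\pm x\in\C$, i.e.\ $-\beta\unit\preceq x\preceq\beta\unit$ and $\othernorm{x}_T\le\beta$. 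This is exactly where closedness of $\C$ (and, through normality, the equivalence of $\othernorm{\cdot}$ and $\othernorm{\cdot}_T$ used below) enters.

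It remains to deduce $B_T^\star(\unit)\subseteq K$. I first argue that $K$ is weak-$*$ compact: by normality the norm $\othernorm{\cdot}_T^\star$ is equivalent to the ambient dual norm, so by Banach--Alaoglu the bounded weak-$*$ closed set $\cP(\unit)=\C^\star\cap\{\<\cdot,\unit>=1\}$ (nonempty since $\unit$ is interior to $\C$) is weak-$*$ compact and convex; $K$ is then the image of $[0,1]\times\cP(\unit)\times\cP(\unit)$ under the weak-$*$ continuous map $(\lambda,\mu,\nu)\mapsto\lambda\mu-(1-\lambda)\nu$, hence weak-$*$ compact, convex, and containing $0$. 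Now take $z$ with $\othernorm{z}_T^\star\le1$. If $z\notin K$, then since $K$ is weak-$*$ closed and convex and $\{z\}$ is compact, Hahn--Banach separation in the weak-$*$ topology of $\cX^\star$ yields a weak-$*$ continuous functional, i.e.\ some $x\in\cX$, with $\<z,x>>\sup_{\mu\in K}\<\mu,x>=\othernorm{x}_T$; but $\<z,x>\le\othernorm{z}_T^\star\,\othernorm{x}_T\le\othernorm{x}_T$, a contradiction. Hence $z\in K$, completing the proof.

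The main obstacle is this reverse inclusion, and within it the two soft functional-analytic points invisible in the finite-dimensional Doeblin--Dobrushin setting: the weak-$*$ compactness of the convex hull $K$ (so that separation produces a genuine element of the predual $\cX$ rather than an abstract bidual functional), and the dual description of Thompson's norm, which rests on $\C=\C^{\star\star}$. An equivalent and slightly slicker route replaces the final separation step by the bipolar theorem: one checks that the pre-polar $K_\circ$ equals $[-\unit,\unit]$ using the same norm identity, and concludes $B_T^\star(\unit)=[-\unit,\unit]^\circ=(K_\circ)^\circ=K$, the last equality because $K$ is already weak-$*$ closed, convex, and contains the origin.
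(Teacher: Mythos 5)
Your proof is correct, and it is built from the same ingredients as the paper's own argument (the one the paper defers to Ellis and Alfsen): the dual formula $\othernorm{x}_T=\sup_{\mu\in\cP(\unit)}|\<\mu,x>|$, weak-star compactness of $\cP(\unit)$ via Banach--Alaoglu, and Hahn--Banach separation in the weak-star topology of $\cX^\star$. The organization, however, differs in a genuine way. The paper separates first, concluding only that $B_T^\star(\unit)$ equals the weak-star \emph{closed} convex hull of $\cP(\unit)\cup-\cP(\unit)$, and then strips off the closure a posteriori: an element of the ball is a weak-star limit of a net $s_a\nu_a-t_a\pi_a$ with $\nu_a,\pi_a\in\cP(\unit)$, $s_a+t_a=1$, and subnet extraction using compactness of $\cP(\unit)$ shows the limit is again of that form. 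You instead prove at the outset that $\operatorname{conv}(\cP(\unit)\cup-\cP(\unit))$ is weak-star compact, as the continuous image of $[0,1]\times\cP(\unit)\times\cP(\unit)$ --- which tacitly uses convexity of $\cP(\unit)$ to collapse arbitrary convex combinations to the form $\lambda\mu-(1-\lambda)\nu$, a point worth one explicit line --- after which a single separation ends the proof. Your ordering is tidier: the continuous-image-of-a-compact-set step is exactly what the paper's subnet manipulation verifies by hand. You also gain self-containedness where the paper quotes the norm identity $\othernorm{x}_T=\sup_{\mu\in\cP(\unit)}|\<\mu,x>|$ without proof: you derive it from closedness of $\C$ via the bipolar relation, together with the normalization that every nonzero $\mu\in\C^\star$ pairs positively with $\unit\in\C_0$, which legitimizes rescaling into $\cP(\unit)$. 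One pedantic correction: nonemptiness of $\cP(\unit)$ follows from $\C$ being a closed pointed cone distinct from $\cX$ (so $\C^\star\neq\{0\}$, by separation), not from interiority of $\unit$, whose role is only the normalization just mentioned; this does not affect the argument, and your closing bipolar variant is likewise sound since $K$ is weak-star closed, convex, and contains the origin.
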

\begin{rem}
 Reeb, Kastoryano, and Wolf~\cite{ReebWolf2011} defined a \firstdef{base} $\mathcal{B}$ of a proper cone $\mathcal{K}$ in a finite dimensional vector space $\mathcal{V}$,  which coincides with the definition of our ``abstract simplex''. They defined the \firstdef{base norm} of $\mu\in \mathcal{ V} $ with respect to $\mathcal{B}$ by:
$$
\othernorm{\mu}_{\mathcal{B}}=\inf\{\lambda\geq 0: \mu\in \lambda \operatorname{conv}(\mathcal{B}\cup-\mathcal{B})\}.
$$
They also defined the \firstdef{distinguishability norm} of $\mu\in \mathcal{V}$ by:
\begin{align}\label{a-vxunit}
\othernorm{\mu}_{\tilde M}=\sup_{0\preceq x\preceq \unit} \<\mu,2x-\unit>.
\end{align}
And Theorem 14 in their paper~\cite{ReebWolf2011} states that the distinguishability norm is equal to the base norm:
\begin{align}\label{a-dualReebWolf}
\othernorm{\mu}_{\tilde M}=\othernorm{\mu}_{\mathcal{B}} \enspace .
\end{align}
Lemma~\ref{l-BTstar} is equivalent to the duality result~\eqref{a-dualReebWolf} of Reeb et al., in a finite dimensional setting, and their approach can be seen as a dual one to ours.
\end{rem}


\section{Characterization of extreme points of the dual unit ball}\label{sec-characdualunitball}
A standard result of functional analysis shows that if
$\mathcal{W}$ is a closed subspace of a Banach space $(\mathcal{X},\othernorm{\cdot})$, then the quotient space $\cX/\mathcal{W}$ is a Banach space,
canonically equipped with the {\em quotient norm}
\[
x\mapsto \inf_{w\in \cW} \othernorm{x+ w} \enspace ,
\]
see~\cite[Chap.~III, \S~4]{Conway90FA}.
The next lemma shows that when $\cX$ is equipped
with Thomspon's norm, Hilbert's seminorm coincides with the
quotient norm of $\cX/\R\unit$,
up to a factor $2$.
\begin{lemma}\label{l-quonorm}
For all $x\in \cX$, we have:
\[
\|x\|_H = 2 \inf_{\lambda\in \R}\|x+\lambda \unit\|_T 
\]
\end{lemma}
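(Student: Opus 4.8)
The plan is to reduce the statement to a one-dimensional minimization over the scalar $\lambda$, exploiting how the two functionals $M(\cdot/\unit)$ and $m(\cdot/\unit)$ transform under translation by multiples of $\unit$. First I would record that for every $x\in\cX$ and every $\lambda\in\R$ one has $M((x+\lambda\unit)/\unit)=M(x/\unit)+\lambda$ and $m((x+\lambda\unit)/\unit)=m(x/\unit)+\lambda$. Both identities are immediate from the definitions in~\eqref{a-eq4}, since $x+\lambda\unit\preceq t\unit$ is equivalent to $x\preceq(t-\lambda)\unit$, so that the feasible set of admissible $t$ is merely shifted by $\lambda$; the same argument applies to $m$. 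Writing $M:=M(x/\unit)$ and $m:=m(x/\unit)$ for brevity, it then follows directly from the definition of Thompson's norm that
\[
\|x+\lambda\unit\|_T=\max\bigl(M+\lambda,\,-m-\lambda\bigr)\enspace.
\]

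The key step is the elementary min-max computation $\inf_{\lambda\in\R}\max(M+\lambda,-m-\lambda)=\tfrac12(M-m)$. For the lower bound I would invoke the arithmetic-mean inequality $\max(a,b)\ge\tfrac12(a+b)$ with $a=M+\lambda$ and $b=-m-\lambda$, which yields $\|x+\lambda\unit\|_T\ge\tfrac12(M-m)$ for every $\lambda$, the dependence on $\lambda$ cancelling out. For the matching upper bound I would simply evaluate at the value $\lambda^\star=-\tfrac12(M+m)$ that equalizes the two affine arguments, obtaining $\|x+\lambda^\star\unit\|_T=\tfrac12(M-m)$. Hence $\inf_{\lambda}\|x+\lambda\unit\|_T=\tfrac12(M-m)=\tfrac12\hilbert{x}{\unit}=\tfrac12\|x\|_H$, which is the claimed identity after multiplying through by $2$.

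There is essentially no serious obstacle here; the only points deserving a word of care are that all quantities are finite and that the displayed expression is genuinely the Thompson norm. Finiteness of $M$ and $m$ is guaranteed by the remark following~\eqref{a-eq4}, which uses that $\unit\in\C_0$ and that $\C$ is closed and pointed. One may also note in passing that $m\le M$, so that the right-hand side is nonnegative: indeed $m\unit\preceq x\preceq M\unit$ forces $(M-m)\unit\in\C$, and since $\unit\in\C_0\subset\C$ with $\C$ pointed, a negative value of $M-m$ would place $-\unit$ in $\C$, contradicting pointedness. With these routine verifications in place, the two displayed computations complete the proof.
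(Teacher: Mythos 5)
Your proposal is correct and follows essentially the same route as the paper: both reduce the claim to minimizing $\max\bigl(M(x/\unit)+\lambda,\,-m(x/\unit)-\lambda\bigr)$ over $\lambda$ and evaluate at the equalizing value $\lambda^\star=-\tfrac12\bigl(M(x/\unit)+m(x/\unit)\bigr)$. The only difference is that you spell out the lower bound via $\max(a,b)\ge\tfrac12(a+b)$ and the finiteness of $M$ and $m$, which the paper leaves implicit.
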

\begin{proof}
The expression $$\|x+\lambda \unit\|_T = \max (M(x/\unit)+\lambda,
-m(x/\unit)-\lambda)$$ is minimal when
$M(x/\unit)+\lambda=
-m(x/\unit)-\lambda$. Substituting the value of $\lambda$ obtained
in this way in $\|x+\lambda \unit\|_T$, we arrive at the announced formula.
\end{proof}
\begin{lemma}\label{l-norm-rel}
The quotient normed space $(\cX/\R\unit, \othernorm{\cdot}_H)$ is a Banach space. Its dual is
 $(\cM(\unit), \othernorm{\cdot}_H^\star)$ where 
$$
\cM(\unit):=\{\mu \in \cX^\star | \<\mu, \unit>=0\},
$$
and
\begin{align}\label{a-hs-ts}
\othernorm{\mu}_H^\star:=\frac{1}{2} \othernorm{\mu}_T^\star, \enspace \forall \mu \in \cM(\unit).
\end{align}
\end{lemma}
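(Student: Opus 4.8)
The plan is to reduce the statement to two standard facts about quotient Banach spaces, using Lemma~\ref{l-quonorm} to keep track of the factor $2$. First I would record that $\R\unit$ is a one-dimensional, hence closed, subspace of $(\cX,\othernorm{\cdot}_T)$, and that the latter is a Banach space by the normality assumption (as noted just after the definition of Thompson's norm, $\othernorm{\cdot}_T$ is equivalent to $\othernorm{\cdot}$). By the classical result quoted before Lemma~\ref{l-quonorm}, the quotient $\cX/\R\unit$ equipped with the quotient norm $q(x+\R\unit):=\inf_{\lambda\in\R}\othernorm{x+\lambda\unit}_T$ is therefore a Banach space. Now Lemma~\ref{l-quonorm} says precisely that the seminorm $\othernorm{\cdot}_H$ on $\cX$ descends to $2q$ on the quotient; since $\hilbert{x}{\unit}=0$ exactly when $x\in\R\unit$, this descended map is a genuine norm, and being a positive scalar multiple of $q$ it defines the same topology. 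Hence $(\cX/\R\unit,\othernorm{\cdot}_H)$ is a Banach space.

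For the identification of the dual, I would invoke the standard duality theorem for quotients: if $\cW$ is a closed subspace of a Banach space, then the map $\mu\mapsto\bar\mu$, where $\bar\mu(x+\cW):=\<\mu,x>$, is an isometric isomorphism from the annihilator $\cW^\perp=\{\mu\in\cX^\star:\<\mu,w>=0,\ \forall w\in\cW\}$ onto $(\cX/\cW)^\star$. Taking $\cW=\R\unit$ identifies $\cW^\perp$ with $\cM(\unit)$ and shows that the dual of $(\cX/\R\unit,q)$ is $(\cM(\unit),\othernorm{\cdot}_T^\star)$ isometrically, the norm $\othernorm{\cdot}_T^\star$ being the one inherited from $\cX^\star$.

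It then remains to account for the factor $2$. Since $\othernorm{\cdot}_H=2q$ on the quotient, the condition $\othernorm{\xi}_H\leq 1$ is the same as $q(\xi)\leq 1/2$, so for $\mu\in\cM(\unit)$ the dual norms satisfy $\othernorm{\mu}_H^\star=\sup_{q(\xi)\leq 1/2}\<\mu,\xi>=\tfrac12\,\othernorm{\mu}_q^\star=\tfrac12\,\othernorm{\mu}_T^\star$, which is exactly~\eqref{a-hs-ts}. I do not expect a genuine obstacle here: the only points needing care are verifying that the continuous functionals on the quotient are precisely the elements of $\cM(\unit)$ (this is the content of the annihilator theorem) and tracking that the dual norm scales by $1/2$ rather than $2$. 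The whole argument is thus a packaging of Lemma~\ref{l-quonorm} together with the two quoted theorems of functional analysis.
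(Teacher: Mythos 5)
Your proof is correct and takes essentially the same approach as the paper: both rest on the quotient/annihilator duality theorem (the paper cites Conway, Chap.~III, Theorem~10.2) combined with Lemma~\ref{l-quonorm}, the only cosmetic difference being that the paper rescales the primal norm (applying the theorem to twice Thompson's norm) before dualizing, whereas you dualize first and then track the factor $\frac{1}{2}$ via the condition $q(\xi)\leq 1/2$ --- trivially equivalent computations. Your explicit verification that $\othernorm{\cdot}_H$ is a genuine norm on the quotient and that completeness holds is a welcome elaboration of what the paper leaves implicit.
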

\begin{proof}
It is shown in~\cite[Chap.~III, Theorem 10.2]{Conway90FA} that
if
$\mathcal{W}$ is a closed subspace of a Banach space $(\mathcal{X},\othernorm{\cdot})$, the dual of the quotient space $\mathcal{X}/\mathcal{W}$ can be identified
isometrically to the space of continuous linear forms on $\mathcal{X}$ that vanish on $\mathcal{W}$, equipped with the dual norm $\othernorm{\cdot}^\star$ of $\mathcal{X}^\star$.
Specializing this result to the case in which $\cX$ is equipped
with twice the Thompson norm and $\mathcal{W}=\R\unit$, and noting that
multiplying the norm on the space $\cX$ by a given positive factor
divites the corresponding dual norm by the same factor, 
we obtain~\eqref{a-hs-ts}.
\end{proof}
\if
The Hilbert's seminorm defines a norm in the quotient space $\cX/\R\unit$. 
It is easily seen from the definitions that
\begin{align}\label{a-nHT}
\othernorm{x}_H\leq 2\othernorm{x}_T.
\end{align}
Thus we know that the dual space of $(\cX/\R\unit, \othernorm{\cdot}_H)$ is a subset of $\cX^\star$.  It is immediate that the dual space should be contained in the hyperplane:
\[
\cM = \{\mu\in \cX^\star\mid \<\mu,\unit> =0\}.
\]
The next lemma shows that the dual space is exactly the hyperplane.
\begin{lemma}\label{l-norm-rel}
The dual space of $(\cX/\R\unit, \othernorm{\cdot}_H)$ is $(\cM, \othernorm{\cdot}_H^\star)$ where 
\begin{align}\label{a-hs-ts}
\othernorm{\mu}_H^\star:=\frac{1}{2} \othernorm{\mu}_T^\star, \enspace \forall \mu \in \cM.
\end{align}
\end{lemma}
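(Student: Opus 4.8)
The plan is to realize $(\cX/\R\unit,\othernorm{\cdot}_H)$ as a rescaling of the standard quotient Banach space and then to invoke the classical duality between a quotient and an annihilator. First I would record that $\R\unit$ is a closed (indeed one-dimensional, since $\unit\neq 0$) subspace of the Banach space $(\cX,\othernorm{\cdot}_T)$, so the quotient $\cX/\R\unit$ equipped with the quotient norm $x\mapsto \inf_{\lambda\in\R}\othernorm{x+\lambda\unit}_T$ is itself a Banach space by the standard result recalled at the start of the section. By Lemma~\ref{l-quonorm} the Hilbert seminorm satisfies $\othernorm{x}_H=2\inf_{\lambda\in\R}\othernorm{x+\lambda\unit}_T$, hence $\othernorm{\cdot}_H$ descends to a genuine norm on $\cX/\R\unit$ equal to twice the quotient norm. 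Multiplying a complete norm by a positive scalar preserves completeness, so $(\cX/\R\unit,\othernorm{\cdot}_H)$ is a Banach space.

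Next I would identify the dual. Equivalently, $\othernorm{\cdot}_H$ is precisely the quotient norm obtained when $\cX$ carries the rescaled Thompson norm $2\othernorm{\cdot}_T$ and $\cW=\R\unit$. The standard duality theorem for quotients~\cite[Chap.~III, Thm.~10.2]{Conway90FA} identifies the dual of $\cX/\cW$ isometrically with the annihilator $\{\mu\in\cX^\star:\<\mu,w>=0 \text{ for all } w\in\cW\}$, equipped with the dual norm inherited from $\cX^\star$. For $\cW=\R\unit$ this annihilator is exactly $\cM(\unit)=\{\mu\in\cX^\star:\<\mu,\unit>=0\}$, which yields the announced identification of the underlying space.

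Finally I would track the scaling constant. Since the Hilbert norm on the quotient arises from the primal norm $2\othernorm{\cdot}_T$ on $\cX$, and since multiplying the norm on a space by a positive factor divides the associated dual norm by that same factor, the dual norm of $\mu\in\cM(\unit)$ produced by this duality is $\tfrac12\othernorm{\mu}_T^\star$, that is $\othernorm{\mu}_H^\star=\tfrac12\othernorm{\mu}_T^\star$. The argument is entirely routine; the only points requiring care are the direction of the factor $2$ (it multiplies the primal norm but divides the dual norm) and the closedness of $\R\unit$, which guarantees that the annihilator duality applies. I do not anticipate any substantial obstacle beyond this bookkeeping.
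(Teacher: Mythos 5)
Your proposal is correct and follows essentially the same route as the paper: both identify $\othernorm{\cdot}_H$ via Lemma~\ref{l-quonorm} with the quotient norm coming from twice Thompson's norm, invoke the quotient--annihilator duality of \cite[Chap.~III, Theorem~10.2]{Conway90FA} with $\cW=\R\unit$, and observe that scaling the primal norm by $2$ divides the dual norm by $2$, giving $\othernorm{\mu}_H^\star=\tfrac12\othernorm{\mu}_T^\star$ on $\cM(\unit)$. Your added remarks on closedness of $\R\unit$ and completeness under rescaling are fine and merely make explicit what the paper leaves implicit.
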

\begin{proof}
Let any $x\in \cX/\R \unit$ and 
$$
\lambda=-\frac{M(x/\unit)+m(x/\unit)}{2},
$$
then 
$$
\|x+\lambda \unit\|_T=\frac{1}{2}\othernorm{x}_H.
$$
Let any $\mu \in \cM$, then
\[
\<\mu,x> =\<\mu,x+\lambda \unit>\leq \|\mu\|_T^\star\|x+\lambda \unit\|_T= \frac{1}{2}\|\mu\|_T^\star \|x\|_H.
\]
Hence the hyperplane $\cM$ is the dual space of $(\cX/\R\unit, \othernorm{\cdot}_H)$ and the dual norm $\othernorm{\cdot}_H^\star$ should satisfy:
$$\|\mu\|_H^\star \leq \frac 1 2 \|\mu\|_T^\star.$$
Finally it follows from the relation~\eqref{a-nHT} that:
$$
\|\mu\|_H^\star \geq \frac 1 2 \|\mu\|_T^\star.
$$
\end{proof}
\fi
The above lemma implies that the unit ball  of the space $(\cM(\unit), \othernorm{\cdot}_H^\star)$, denoted by
$B_H^\star(\unit)$, satisfies:
\begin{align}\label{a-BHstar}
B_H^\star(\unit)=2B_T^\star(\unit) \cap \cM(\unit).
\end{align}
\begin{rem}
 In the case of the standard positive cone ($\cX=\R^n$, $\C=\R^n_+$ and $\unit=\mathbf 1$),
Lemma~\ref{l-norm-rel} implies that for any two probability measures $\mu,\nu\in \pP(\mathbf 1)$, the dual norm $\othernorm{\mu-\nu}_H^\star$ is the total variation distance between $\mu$ and $\nu$:
$$
\othernorm{\mu-\nu}_H^\star=\frac{1}{2}\othernorm{\mu-\nu}_1=\othernorm{\mu-\nu}_{TV}
$$
\end{rem}

Before giving a representation of the extreme points of $B_H^\star(\unit)$,
 we define a \firstdef{disjointness} relation $\bot$ on $\pP(\unit)$.\index{disjointness}
\begin{defi}
 For all $\nu,\pi \in \pP(\unit)$, we say that $\nu$ and $\pi$ are {\em disjoint}, denoted by $\nu \perp \pi$, if 
$$
\mu =\frac{\nu+\pi}{2}
$$
for all $\mu \in \pP(\unit)$ such that $\mu \succeq \frac{\nu}{2}$ and $\mu\succeq \frac{\pi}{2}$. 
\end{defi}
\begin{example}
In the case of the standard positive cone ($\cX=\R^n$, $\C=\R^n_+$ and $\unit=\mathbf 1$), 
two points $\nu,\pi$ in  $\pP(\mathbf 1)$ are disjoint if and only if
for all $1\leq i\leq n$, $\nu_i=0$ or $\pi_i=0$ holds, meaning
that $\nu$ and $\pi$, thought of as discrete probability measures,
have disjoint supports.
\end{example}
We have the following characterization of the disjointness property.
\begin{lemma}\label{l-nuperppi}
 Let $\nu,\pi\in \pP(\unit)$. The following assertions are equivalent:
\begin{itemize}
\item [(a)]$\nu\perp\pi$.
\item[(b)] The only elements $\rho,\sigma\in \pP(\unit)$ satisfying
$$
\nu-\pi=\rho-\sigma
$$
are $\rho=\nu$ and $\sigma=\pi$.
\end{itemize}
\end{lemma}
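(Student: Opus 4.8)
The plan is to set up an explicit affine correspondence between the candidate solutions quantified over in (a) and in (b), and to read off the equivalence from it. Observe first that $\mu_0:=\tfrac12(\nu+\pi)$ always lies in $\pP(\unit)$ and satisfies $\mu_0\succeq\tfrac12\nu$ and $\mu_0\succeq\tfrac12\pi$ (the differences being $\tfrac12\pi$ and $\tfrac12\nu$, which lie in $\C^\star$); likewise the pair $(\rho,\sigma)=(\nu,\pi)$ always solves $\nu-\pi=\rho-\sigma$. Thus both (a) and (b) are uniqueness statements, each asserting that this ``obvious'' solution is the only one, and it suffices to match up the remaining candidate solutions on the two sides.

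First I would prove $(a)\Rightarrow(b)$. Given $\rho,\sigma\in\pP(\unit)$ with $\nu-\pi=\rho-\sigma$, I rewrite this as $\nu+\sigma=\rho+\pi$ and set
\[
\mu:=\tfrac12(\nu+\sigma)=\tfrac12(\rho+\pi).
\]
Using the first expression together with $\sigma\in\C^\star$ gives $\mu\succeq\tfrac12\nu$; using the second together with $\rho\in\C^\star$ gives $\mu\succeq\tfrac12\pi$; and $\<\mu,\unit>=\tfrac12(1+1)=1$ with $\mu\in\C^\star$ shows $\mu\in\pP(\unit)$. Hypothesis (a) then forces $\mu=\tfrac12(\nu+\pi)$, and comparing this with the two expressions for $\mu$ yields $\sigma=\pi$ and $\rho=\nu$, which is exactly (b).

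For the converse $(b)\Rightarrow(a)$ I would run the same correspondence backwards. Let $\mu\in\pP(\unit)$ satisfy $\mu\succeq\tfrac12\nu$ and $\mu\succeq\tfrac12\pi$, and define $\rho:=2\mu-\pi$ and $\sigma:=2\mu-\nu$. Then $\rho=2(\mu-\tfrac12\pi)\in\C^\star$ and $\sigma=2(\mu-\tfrac12\nu)\in\C^\star$ by the two domination hypotheses, while $\<\rho,\unit>=\<\sigma,\unit>=2-1=1$, so $\rho,\sigma\in\pP(\unit)$; moreover $\rho-\sigma=\nu-\pi$ by construction. Hypothesis (b) forces $\rho=\nu$, i.e.\ $2\mu-\pi=\nu$, whence $\mu=\tfrac12(\nu+\pi)$, which is (a).

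The whole argument is essentially the change of variables $\mu\leftrightarrow(\rho,\sigma)$ given by $\rho=2\mu-\pi$, $\sigma=2\mu-\nu$, and I expect no serious obstacle. The only point needing care is verifying that the constructed objects land in the abstract simplex $\pP(\unit)$: the normalization $\<\cdot,\unit>=1$ is automatic from linearity, so the genuine content is that membership in the dual cone $\C^\star$ is precisely what the domination relations encode ($\mu-\tfrac12\nu,\ \mu-\tfrac12\pi\in\C^\star$ on one side, $\rho,\sigma\in\C^\star$ on the other). No appeal to extreme points, closedness, or finite dimension is required; the equivalence is a purely order-theoretic identity.
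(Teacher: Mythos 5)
Your proof is correct and follows essentially the same route as the paper's: both directions use the change of variables $\mu=\tfrac12(\nu+\sigma)=\tfrac12(\rho+\pi)$ for $(a)\Rightarrow(b)$ and the decomposition $\nu-\pi=(2\mu-\pi)-(2\mu-\nu)$ for $(b)\Rightarrow(a)$. The only difference is that you explicitly verify that $2\mu-\pi$ and $2\mu-\nu$ lie in $\pP(\unit)$, a check the paper leaves implicit.
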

\begin{proof}
(a)$\Rightarrow$ (b): Let any $\rho,\sigma\in \pP(\unit)$ such that
$$
\nu-\pi=\rho-\sigma.
$$
Then it is immediate that 
$$
\nu+\sigma=\pi+\rho. 
$$
Let $\mu=\frac{\nu+\sigma}{2}=\frac{\pi+\rho}{2}$. Then $\mu\in\pP(\unit)$, $\mu\succeq \frac{\nu}{2}$ and $\mu\succeq \frac{\pi}{2}$. Since $\nu \perp\pi$, we obtain that $\mu=\frac{\nu+\pi}{2}$. It follows that $\rho=\nu$ and $\sigma=\pi$.

(b)$\Rightarrow$ (a): Let any $\mu\in \pP(\unit)$ such that $\mu\succeq \frac{\nu}{2}$ and $\mu\succeq \frac{\pi}{2}$. Then 
$$
\nu-\pi=(2\mu-\pi)-(2\mu-\nu).
$$
From (b) we know that $2\mu-\pi=\nu$.
\end{proof}
We denote by $\extr(\cdot)$ the set of extreme points of a convex set.
\begin{prop}\label{p-carac-ext}
The set of extreme points of $B_H^\star(\unit)$, denoted by $\extr{B_H^\star(\unit)}$, is characterized by:
$$
\extr{B_H^\star(\unit)}=\{\nu-\pi\mid\nu,\pi\in \extr{\pP(\unit)},\nu \perp \pi\}.
$$ 
\end{prop}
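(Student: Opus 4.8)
The plan is to first reduce the statement to a transparent description of the Hilbert unit ball $B_H^\star(\unit)$, and then treat the two inclusions separately, the delicate one being that extremality forces disjointness.

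\emph{Reduction.} First I would combine Lemma~\ref{l-BTstar}, i.e.\ $B_T^\star(\unit)=\operatorname{conv}(\pP(\unit)\cup-\pP(\unit))$ from~\eqref{e-bstar}, with the identity~\eqref{a-BHstar}, $B_H^\star(\unit)=2B_T^\star(\unit)\cap\cM(\unit)$, to establish the clean formula
\[
B_H^\star(\unit)=\{\nu-\pi\mid \nu,\pi\in\pP(\unit)\}.
\]
Indeed, if $\eta\in B_H^\star(\unit)$ then $\eta\in\cM(\unit)$ and $\eta/2\in B_T^\star(\unit)$, so (using convexity of $\pP(\unit)$) one can write $\eta/2=s\nu-t\pi$ with $s,t\ge0$, $s+t=1$, $\nu,\pi\in\pP(\unit)$; the condition $\<\eta,\unit>=0$ reads $s-t=0$, forcing $s=t=\frac12$ and hence $\eta=\nu-\pi$. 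The reverse containment is immediate since $\<\nu-\pi,\unit>=0$ and $\frac12(\nu-\pi)\in\operatorname{conv}(\pP(\unit)\cup-\pP(\unit))$. This is the only place where convexity of $B_T^\star$ enters.

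\emph{Easy inclusion ($\supseteq$).} Fix $\nu,\pi\in\extr\pP(\unit)$ with $\nu\perp\pi$, and suppose $\nu-\pi=\lambda\mu_1+(1-\lambda)\mu_2$ with $\lambda\in(0,1)$ and $\mu_1,\mu_2\in B_H^\star(\unit)$. Writing $\mu_i=\rho_i-\sigma_i$ with $\rho_i,\sigma_i\in\pP(\unit)$ and setting $\rho=\lambda\rho_1+(1-\lambda)\rho_2$, $\sigma=\lambda\sigma_1+(1-\lambda)\sigma_2\in\pP(\unit)$, one gets $\nu-\pi=\rho-\sigma$. Lemma~\ref{l-nuperppi}(b) then forces $\rho=\nu$ and $\sigma=\pi$, and extremality of $\nu$ (resp.\ $\pi$) in $\pP(\unit)$ upgrades this to $\rho_1=\rho_2=\nu$ (resp.\ $\sigma_1=\sigma_2=\pi$), whence $\mu_1=\mu_2=\nu-\pi$. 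Thus $\nu-\pi\in\extr B_H^\star(\unit)$.

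\emph{Hard inclusion ($\subseteq$), and the main obstacle.} Let $\mu\in\extr B_H^\star(\unit)$; by the reduction $\mu=\nu-\pi$ with $\nu,\pi\in\pP(\unit)$. Extremality of $\nu$ is easy: a splitting $\nu=\frac12(\nu_1+\nu_2)$ in $\pP(\unit)$ gives $\mu=\frac12\big((\nu_1-\pi)+(\nu_2-\pi)\big)$, a midpoint of two points of $B_H^\star(\unit)$, so extremality forces $\nu_1=\nu_2$; likewise $\pi\in\extr\pP(\unit)$. The crux is disjointness. Assume for contradiction $\nu\not\perp\pi$; by Lemma~\ref{l-nuperppi}(b) there exist $\rho,\sigma\in\pP(\unit)$ with $\nu-\pi=\rho-\sigma$ and $(\rho,\sigma)\neq(\nu,\pi)$, equivalently $\nu+\sigma=\pi+\rho$. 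The key idea I would use is to exploit this additive relation to exhibit $\mu$ as a nontrivial midpoint: set $\mu_1=\nu-\sigma$ and $\mu_2=\rho-\pi$, both in $B_H^\star(\unit)$. Then $\mu_1+\mu_2=(\nu-\pi)+(\rho-\sigma)=2\mu$, while $\mu_1=\mu_2$ would mean $\nu+\pi=\rho+\sigma$, which combined with $\nu+\sigma=\pi+\rho$ yields $\nu=\rho$ and $\pi=\sigma$, contradicting $(\rho,\sigma)\neq(\nu,\pi)$. Hence $\mu_1\neq\mu_2$ and $\mu=\frac12(\mu_1+\mu_2)$ is a genuine convex decomposition, contradicting extremality of $\mu$. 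Therefore $\nu\perp\pi$, which finishes the proof. The single non-routine step is precisely this last one: choosing the witnessing pair $(\nu-\sigma,\;\rho-\pi)$, which works exactly because the failure of disjointness, recast through Lemma~\ref{l-nuperppi}, delivers the relation $\nu+\sigma=\pi+\rho$; everything else is bookkeeping.
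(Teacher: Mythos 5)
Your proof is correct and follows essentially the same route as the paper: the same reduction via Lemma~\ref{l-BTstar} and \eqref{a-BHstar} to $B_H^\star(\unit)=\{\nu-\pi\mid\nu,\pi\in\pP(\unit)\}$, the same use of Lemma~\ref{l-nuperppi} in both inclusions, and the same key midpoint decomposition $\nu-\pi=\frac{1}{2}\bigl((\nu-\sigma)+(\rho-\pi)\bigr)$ for the disjointness step. The only differences are cosmetic: you phrase the disjointness argument contrapositively (assuming $\nu\not\perp\pi$ and exhibiting a proper splitting) where the paper argues directly, and you test extremality against general $\lambda$-combinations rather than midpoints.
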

\begin{proof}
It follows from~\eqref{e-bstar} that every
point $\mu\in B_T^\star(\unit)$ can be written
as $$\mu= s\nu - t\pi$$ with $ s+t=1,s,t\geq 0$,
$\nu,\pi \in \cP(\unit)$. Moreover, if $\mu\in \cM(\unit)$, then
$$0=\<\mu,\unit>=s\<\nu,\unit>-t\<\pi,\unit>
= s-t,$$thus $s=t=\frac{1}{2}$. 
Therefore every $\mu \in B_T^\star(\unit) \cap \cM(\unit)$ can be written as
\[
\mu = \frac{\nu-\pi}{2}, \enspace \nu,\pi \in \pP(\unit).
\]
Therefore by~\eqref{a-BHstar} we proved that
\begin{align}\label{a-BHstarunit}
B_H^\star(\unit)=\{\nu-\pi:\nu,\pi\in \pP(\unit)\}.
\end{align}
Now let $\nu,\pi \in \extr \pP(\unit)$ and $\nu\perp \pi$. We are going to prove that $\nu-\pi\in \extr B_H^\star(\unit)$. Let $\nu_1,\pi_1,\nu_2,\pi_2\in \pP(\unit)$ such that
$$
\nu-\pi=\frac{\nu_1-\pi_1}{2}+\frac{\nu_2-\pi_2}{2}.
$$
Then 
$$
\nu-\pi=\frac{\nu_1+\nu_2}{2}-\frac{\pi_1+\pi_2}{2}.
$$
By Lemma~\ref{l-nuperppi}, the only possibility is $2\nu={\nu_1+\nu_2}$ and $2\pi=\pi_1+\pi_2$.
Since $\nu,\pi\in\extr \pP(\unit)$ we obtain that $\nu_1=\nu_2=\nu$ and $\pi_1=\pi_2=\pi$. Therefore $\nu-\pi\in \extr{B_H^\star(\unit)}$.

Now let $ \nu,\pi\in \pP(\unit)$ such that $\nu-\pi \in \extr{B_H^\star(\unit)}$. 
Assume by contradiction that $\nu$ is not extreme in $\pP(\unit)$ (the case in which
$\pi$ is not extreme can be dealt with similarly). Then, we can
find $\nu_1,\nu_2\in \pP(\unit)$, $\nu_1\neq \nu_2$, such that
$\nu = \frac{\nu_1+ \nu_2}{2}$. It follows that $$\mu = \frac{\nu_1-\pi}{2}
+\frac{\nu_2-\pi}{2},$$ where ${\nu_1-\pi},\nu_2-\pi$ are distinct
elements of $ B_H^\star(\unit)$, which is a contradiction. Next we show that $\nu\perp \pi$. To this end,
let any $\rho,\sigma\in \pP(\unit)$ such that 
$$
\nu-\pi=\rho-\sigma.
$$
Then 
$$
\nu-\pi=\frac{\nu-\pi+\rho-\sigma}{2}=\frac{\nu-\sigma}{2}+\frac{\rho-\pi}{2}.
$$
If $\sigma\neq \pi$, then $\nu-\sigma\neq \nu-\pi$ and this contradicts the fact that $\nu-\pi$ is extremal. Therefore $\sigma=\pi$ and $\rho=\nu$.
From Lemma~\ref{l-nuperppi}, we deduce that $\nu\perp\pi$.


\end{proof}
\begin{rem}
In the case of standard positive cone ($\cX=\R^n$, $\C=\R^n_+$ and $\unit=\mathbf 1$),  the set of extreme points of $\pP(\mathbf 1)$ is the set of 
standard basis vectors $\{e_i\}_{i=1,\dots,n}$. The extreme points are pairwise disjoint.
\end{rem}
\begin{rem}\label{rem-xx*yy*}
In the case of cone of semidefinite matrices ($\cX=\sym_n$, $\C=\sym_n^+$ and $\unit=I_n$), the set of extreme points of $\pP(I_n)$ is
$$
\extr \pP(I_n)=\{xx^*\mid x\in \CC^n, x^*x=1\}\enspace,
$$
which are called \firstdef{pure states} in quantum information terminology.\index{pure state}
Two extreme points $xx^*$ and $yy^*$ are disjoint if and only if $x^*y=0$. To see this, note that if $x^*y=0$ then any Hermitian matrix $X$ such that $X\succeq xx^*$ and $X\succeq yy^*$ should satisfy
$X\succeq xx^ *+yy^ *$. Hence by definition $xx^*$ and $yy^ *$ are disjoint. Inversely, suppose that $xx^ *$ and $yy^ *$ are disjoint and consider the spectral decomposition of the matrix $xx^*-yy^*$, i.e., there is $\lambda\leq 1$ and two orthonormal vectors $u,v$ such that $xx^*-yy^*=\lambda(uu^ *-vv^ *)$. It follows that $xx^*-yy^*=uu^ *-((1-\lambda)uu^ *+\lambda vv^ *)$.
By Lemma~\ref{l-nuperppi}, the only possibility is $yy^ *=(1-\lambda)uu^ *+\lambda vv^ *$ and $xx^ *=uu^*$ thus $\lambda=1$, $u=x$ and $v=y$. Therefore $x^ *y=0$.
\end{rem}

\section{The operator norm induced by Hopf's oscillation seminorm}\label{sec-operatornormch}
Consider two real Banach spaces $\cX_1$ and $\cX_2$. Let $\C_1\subset \cX_1$ and $\C_2\subset \cX_2$ be respectively 
two closed pointed convex normal cones with non empty interiors $\C_1^ 0$ and $\C_2^0$.
Let $\unit_1\in \C_1^ 0$ and $\unit_2\in \C_2^ 0$. 
Then, we know from Section~\ref{sec-characdualunitball} that the two quotient spaces $(\cX_1/\R \unit_1,\othernorm{\cdot}_H)$ and $(\cX_2/\R \unit_2,\othernorm{\cdot}_H)$  are Banach spaces. The dual spaces of $(\cX_1/\R \unit_1,\othernorm{\cdot}_H)$ and $(\cX_2/\R \unit_2,\othernorm{\cdot}_H)$ are respectively  $(\cM(\unit_1),\othernorm{\cdot}_H^ \star)$ and $(\cM(\unit_2),\othernorm{\cdot}_H^ \star)$ (see Lemma~\ref{l-norm-rel}).

Let $T$ be a continuous
linear map from the space $(\cX_1/\R \unit_1,\othernorm{\cdot}_H)$ to
$(\cX_2/\R \unit_2,\othernorm{\cdot}_H)$.
The operator norm of $T$, denoted by $\othernorm{T}_H$, is given by:
\begin{align}\label{e-def-opH}
\|T\|_H:= \sup_{\othernorm{x}_H=1} \|T(x)\|_H =\sup \frac{\omega(T(x)/\unit_2)}{\omega(x/\unit_1)}\enspace.
\end{align}
 By definition, the \firstdef{adjoint operator} $T^\star: (\cM(\unit_2),\othernorm{\cdot}_H^ \star) \rightarrow (\cM(\unit_1),\othernorm{\cdot}_H^ \star) $ of $T$ is:
$$
\<T^\star(\mu),x>=\<\mu,T(x)>,\enspace \forall \mu \in \cM(\unit_2),x\in \cX_1/\R \unit_1.
$$
The operator norm  of $T^\star$, denoted by $\othernorm{T^ \star}_H^ \star$, is then:
\[ \|T^\star\|_H^\star:= \sup_{\mu \in B_H^\star(\unit_2)} \|T^\star(\mu)\|^\star_H.
\]
A classical duality result (see \cite[\S~6.8]{aliprantis}) shows that an operator and its adjoint have the same operator norm. In particular, 
$$
\|T\|_H = \|T^\star\|_H^\star.
$$
\begin{theo}\label{th-opnorm}
Let $T:\cX_1 \to \cX_2$ 
be a bounded linear map such that $T(\unit_1)\in \mathbb{R} \unit_2$. Then,
\begin{align*}
\|T\|_H  &=\|T^\star\|_H^\star= \frac 1 2 \sup_{\substack{\nu,\pi \in  \pP(\unit_2)}} 
\|T^\star(\nu)-T^\star(\pi)\|_T^\star\\
&=\sup_{\substack{\nu,\pi \in \pP(\unit_2)}} \sup_{ x\in[0, \unit_1]}\<\nu-\pi,T(x)>.
\end{align*}
Moreover, the supremum 
can be restricted to the set 
of mutually disjoint extreme points:
\begin{align}
\|T\|_H  =\|T^\star\|_H^\star&= \frac 1 2 \sup_{\substack{\nu,\pi \in \operatorname{extr} \cP(\unit_2)\\ \nu\perp\pi}} 
\|T^\star(\nu)-T^\star(\pi)\|_T^\star\nonumber\\
&=\sup_{\substack{\nu,\pi \in \operatorname{extr}\cP(\unit_2)\\ \nu\perp\pi}} \sup_{ x\in[0, \unit_1]}\<\nu-\pi, T(x)>.
\label{e-carac-TH}
\end{align}
\end{theo}
\begin{proof}
 We already noted that $\|T\|_H = \|T^\star\|_H^\star$. 
Moreover, 
\[
\|T^\star\|_H^\star
= \sup_{\mu\in B_H^\star(\unit_2)} \|T^\star(\mu)\|_H^\star.
\]
By the characterization of $B_H^\star(\unit_2)$ in~\eqref{a-BHstarunit}
and the characterization of the norm $\|\cdot\|_H^\star$ in Lemma~\ref{l-norm-rel}, we get
\begin{align*}
\sup_{\mu\in B_H^\star(\unit_2)} \|T^\star(\mu)\|_H^\star
&=\sup_{\nu,\pi\in \pP(\unit_2)} \|T^\star(\nu)-T^\star(\pi)\|_H^\star
\\ &=\frac 1 2 \sup_{\nu,\pi\in \pP(\unit_2)} \|T^\star(\nu)-T^\star(\pi)\|_T^\star
\enspace .
\end{align*}
For the second equality, note that
\begin{align*}
\othernorm{T^\star(\nu)-T^\star(\pi)}_T^\star
&=\displaystyle\sup_{x\in[0,\unit_1]} \<T^\star(\nu)-T^\star(\pi),2x-\unit_1>\\
&=2\displaystyle\sup_{ x\in[0,\unit_1]} \<T^\star(\nu)-T^\star(\pi),x>\enspace.
\end{align*}
We next show that the supremum can be restricted to the set of extreme points. By the Banach-Alaoglu theorem, $B_H^\star(\unit_2)$ is weak-star compact,
and it is obviously convex. The dual space $\cM(\unit_2)$ endowed with the weak-star topology is a locally convex topological space. Thus by the Krein-Milman theorem, the unit ball $B_H^\star(\unit_2)$, which is a compact convex set in 
$\cM(\unit_2)$ with respect to the weak-star topology, is the closed convex hull of its extreme points. So every element $\rho$ of $B_H^\star(\unit_2)$
is the limit of a net  $(\rho_\alpha)_\alpha$ of elements in $\operatorname{conv}\big(\operatorname{extr} B_H^\star(\unit_2)\big)$.
Observe now that the function
\[ \varphi: \mu \mapsto \|T^\star(\mu)\|_H^\star = \sup_{x\in B_H(\unit_1)} \<T^\star(\mu),x>
=\sup_{x\in B_H(\unit_1)} \<\mu, T(x)>
\]
which is a sup of weak-star continuous maps is convex and  weak-star lower
semi-continuous. This implies that 
\begin{align*}\varphi(\rho)
&\leq  \liminf_\alpha \varphi(\rho_\alpha) \\&\leq \sup\{\varphi(\mu): \mu\in \operatorname{conv}\big(\operatorname{extr} B_H^\star(\unit_2)\big)\} \\& = \sup\{\varphi(\mu): \mu\in \operatorname{extr} B_H^\star(\unit_2)\}\enspace.
\end{align*}
Using the characterization of the extreme points in Proposition~\ref{p-carac-ext}, we get:
\begin{align*}
\sup_{\mu\in B_H^\star(\unit_2)} \|T^\star(\mu)\|_H^\star &= 
\sup_{\mu\in\operatorname{extr} B_H^\star(\unit_2)} \|T^\star(\mu)\|_H^\star 
\\
&=\sup_{\substack{\nu,\pi \in \operatorname{extr}\cP(\unit_2)\\ \nu\perp\pi}} \|T^\star(\nu)-T^\star(\pi)\|_H^ \star.\qedhere
\end{align*}
\end{proof}

\begin{rem}\label{rk-reached}
When $\cX_1$ is of finite dimension, the set $[0,\unit_1]$ is the convex hull of the set of its extreme points, hence, the supremum over the variable $x\in[0,\unit_1]$ in~\eqref{e-carac-TH} is attained at an extreme point. Similarly,
if $\cX_2$ is of finite dimension, the suprema over $(\nu,\pi)$ in the same
equation are also attained, because the map $\varphi$ in the proof of the previous theorem, which is a supremum of an equi-Lipschitz family of maps,
is continuous (in fact, Lipschitz).
\end{rem}
\begin{rem}\label{rem-comReeb}
 Theorem~\ref{th-opnorm} should be compared with a result of~\cite{ReebWolf2011} which can be stated as follows.
\begin{prop}[Proposition 12 in~\cite{ReebWolf2011}]\label{p-Reeb12}
Let $\cV,\cV'$ be two finite dimensional vector spaces and $L:\mathcal{V}\rightarrow \mathcal{V'}$ be a linear map and let $\mathcal{B}\subset \mathcal{V}$ and $\mathcal{B'}\subset \mathcal{V}'$ be bases. Then
\begin{align}\label{a-Reeb12}
\sup_{v_1\neq v_2\in\mathcal{B}} \frac{\othernorm{L(v_1)-L(v_2)}_{\mathcal{B'}}}{\othernorm{v_1-v_2}_{\mathcal{B}}}=\frac{1}{2} 
\sup_{v_1,v_2\in \extr \mathcal{B}}\othernorm{L(v_1)-L(v_2)}_{\mathcal{B}'}
\end{align}
\end{prop}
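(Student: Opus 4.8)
The plan is to recognize both sides of~\eqref{a-Reeb12} through the identifications already established, and thereby reduce the statement to the extreme-point analysis of Section~\ref{sec-characdualunitball}. Writing $\mathcal{B}=\cP(\unit)$ and $\mathcal{B}'=\cP(\unit')$ for the units $\unit,\unit'$ cutting the two bases, Lemma~\ref{l-BTstar} gives $\operatorname{conv}(\mathcal{B}\cup-\mathcal{B})=B_T^\star(\unit)$, so the base norm $\othernorm{\cdot}_{\mathcal{B}}$, being the Minkowski gauge of this set, coincides with the dual Thompson norm $\othernorm{\cdot}_T^\star$ (and likewise for $\mathcal{B}'$). First I would reformulate the left-hand side as an operator norm over the dual unit ball. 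Every difference $v_1-v_2$ with $v_1,v_2\in\mathcal{B}$ lies in $\cM(\unit)$, and by~\eqref{a-BHstarunit} these differences exhaust $B_H^\star(\unit)$, the unit ball of the genuine norm $\othernorm{\cdot}_H^\star$ on $\cM(\unit)$ (Lemma~\ref{l-norm-rel}); since the ratio is scale invariant, this gives
\begin{align*}
\sup_{v_1\neq v_2\in\mathcal{B}} \frac{\othernorm{L(v_1)-L(v_2)}_{\mathcal{B}'}}{\othernorm{v_1-v_2}_{\mathcal{B}}}
=\sup_{\substack{\mu\in\cM(\unit)\\ \mu\neq 0}} \frac{\othernorm{L\mu}_{\mathcal{B}'}}{\othernorm{\mu}_T^\star}
=\frac{1}{2}\sup_{\mu\in B_H^\star(\unit)} \othernorm{L\mu}_{\mathcal{B}'},
\end{align*}
where the last step uses $\{\mu\in\cM(\unit):\othernorm{\mu}_T^\star\leq1\}=B_T^\star(\unit)\cap\cM(\unit)=\tfrac12 B_H^\star(\unit)$, which is~\eqref{a-BHstar}.

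Next I would pass to extreme points exactly as in the proof of Theorem~\ref{th-opnorm}: the map $\mu\mapsto\othernorm{L\mu}_{\mathcal{B}'}$ is convex and continuous, and $B_H^\star(\unit)$ is compact (closed and bounded in finite dimension) and convex, so by the Bauer maximum principle the supremum above is attained at an extreme point. By Proposition~\ref{p-carac-ext} such a point has the form $\nu-\pi$ with $\nu,\pi\in\extr\mathcal{B}$ and $\nu\perp\pi$, whence
\begin{align*}
\sup_{v_1\neq v_2\in\mathcal{B}} \frac{\othernorm{L(v_1)-L(v_2)}_{\mathcal{B}'}}{\othernorm{v_1-v_2}_{\mathcal{B}}}
=\frac{1}{2}\sup_{\substack{\nu,\pi\in\extr\mathcal{B}\\ \nu\perp\pi}}\othernorm{L\nu-L\pi}_{\mathcal{B}'}.
\end{align*}
This is the sharper form carried by Theorem~\ref{th-opnorm}, in which the supremum is governed by pairs of \emph{disjoint} extreme points.

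It remains to check that relaxing the disjointness constraint does not change the value, which yields~\eqref{a-Reeb12}. The inequality $\leq$ is immediate, since disjoint pairs form a subset of all pairs of extreme points. For the reverse, the key observation is that any two base points are at base-norm distance at most $2$: indeed $\othernorm{v}_{\mathcal{B}}=\othernorm{v}_T^\star=1$ for every $v\in\mathcal{B}$, as $\<v,\unit>=1$ forces $\othernorm{v}_T^\star\geq 1$ while $v\in B_T^\star(\unit)$ forces $\othernorm{v}_T^\star\leq 1$; hence $\othernorm{\nu-\pi}_{\mathcal{B}}\leq 2$ for all $\nu,\pi\in\extr\mathcal{B}$. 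Consequently each term $\tfrac12\othernorm{L\nu-L\pi}_{\mathcal{B}'}$ is dominated by the ratio $\othernorm{L\nu-L\pi}_{\mathcal{B}'}/\othernorm{\nu-\pi}_{\mathcal{B}}$, itself bounded by the left-hand side; taking the supremum over all extreme pairs gives the missing inequality. I expect this final comparison to be the conceptual crux: it is precisely the point at which Reeb, Kastoryano and Wolf can dispense with a disjointness hypothesis, the non-disjoint extreme pairs sitting at distance strictly below the maximal value $2$ and being therefore dominated. Equivalently, one may bypass the extreme-point machinery altogether by expanding $v_1=\sum_i\alpha_i e_i$, $v_2=\sum_j\beta_j f_j$ over extreme points and writing $v_1-v_2=\sum_{i,j}\alpha_i\beta_j(e_i-f_j)$ (using $\sum_i\alpha_i=\sum_j\beta_j=1$), then applying convexity of $\othernorm{L\cdot}_{\mathcal{B}'}$; this makes transparent that all pairs, disjoint or not, enter the bound on equal footing.
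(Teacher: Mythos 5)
Your proof is correct, and it is worth noting that the paper itself contains no proof of Proposition~\ref{p-Reeb12}: the statement is quoted from~\cite{ReebWolf2011} inside Remark~\ref{rem-comReeb}, and the closest in-paper result is Theorem~\ref{th-opnorm}, which establishes the \emph{disjointness-restricted} analogue for maps of the form $T^\star$ with $T(\unit_1)\in\R\unit_2$. What you have done is reconstruct the cited result from the paper's own toolkit: the identification of the base norm with $\othernorm{\cdot}_T^\star$ via Lemma~\ref{l-BTstar}, the reduction of the Lipschitz ratio to $\tfrac12\sup_{\mu\in B_H^\star(\unit)}\othernorm{L\mu}_{\mathcal{B}'}$ via~\eqref{a-BHstarunit}, Lemma~\ref{l-norm-rel} and~\eqref{a-BHstar}, and the passage to extreme points, where you legitimately replace the paper's Krein--Milman/weak-star lower-semicontinuity argument by the Bauer maximum principle, available in finite dimension. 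Your route buys two things that a bare citation of Theorem~\ref{th-opnorm} would not: it works for an \emph{arbitrary} linear $L$ (no base-preservation or unit-scaling hypothesis is used, only convexity and continuity of $\mu\mapsto\othernorm{L\mu}_{\mathcal{B}'}$), and it proves the sharpened disjoint-pair form of~\eqref{a-Reeb12} in that generality, which is exactly the ``additional information'' the paper claims in Remark~\ref{rem-comReeb} around~\eqref{e-disjoint}. The de-relaxation step, the one ingredient not present in the paper, is sound: $\langle v,\unit\rangle=1$ together with $v\in B_T^\star(\unit)$ gives $\othernorm{v}_{\mathcal{B}}=1$ for $v\in\mathcal{B}$, hence $\othernorm{\nu-\pi}_{\mathcal{B}}\leq 2$, so each term $\tfrac12\othernorm{L\nu-L\pi}_{\mathcal{B}'}$ is dominated by the corresponding ratio. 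Two minor caveats: your closing heuristic that non-disjoint extreme pairs sit at base-norm distance \emph{strictly} below $2$ is neither proved nor needed (only the inequality $\leq 2$ enters); and the sketched bypass via $v_1-v_2=\sum_{i,j}\alpha_i\beta_j(e_i-f_j)$, while a valid alternative closer to the original argument of~\cite{ReebWolf2011}, still requires your first reduction of the left-hand side to $\tfrac12\sup_{\nu,\pi\in\mathcal{B}}\othernorm{L(\nu-\pi)}_{\mathcal{B}'}$ before convexity can be applied, so it does not quite dispense with that part of the machinery.
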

The first term in~\eqref{a-Reeb12} is called the \firstdef{contraction ratio} of the linear map $L$, with respect to base norms. 
One important application of this proposition concerns the \firstdef{base preserving} maps $L$ such that $L(\mathcal{B})\subset \mathcal{B'}$.
Let us translate this proposition in the present setting. Consider a linear map $T:\cX_1/\R\unit_1\rightarrow \cX_2/\R\unit_2$. Then $T^\star: \cX_2^ \star\rightarrow \cX_1^ \star$ is a base preserving linear map ($ T^ \star(\pP(\unit_2))\subset \pP(\unit_1)$) and so,
Proposition~12 of~\cite{ReebWolf2011} shows that:
\begin{align}
\|T^\star\|_H^\star=\sup_{\substack{\nu,\pi\in \pP(\unit_2)\\ \nu\neq \pi}} \frac{\othernorm{T^\star(\nu-\pi)}_T^\star}{\othernorm{\nu-\pi}_{T}^\star}=\frac{1}{2} \sup_{\nu,\pi
\in \extr \pP(\unit_2)} \othernorm{T^\star(\nu)-T^\star(\pi)}_{T}^\star
\label{e-disjoint}
\end{align}
Hence, by comparison with~\cite{ReebWolf2011}, the additional
information here is the equality between 
the contraction ratio in Hilbert's seminorm of a unit preserving linear map $\|T\|_H$, and the contraction ratio with respect to the base norms
of the dual base preserving map $\|T^\star\|_H^\star$. The latter is the primary
object of interest in quantum information theory 
whereas the former
is of interest in the control/consensus literature~\cite{Tsitsiklis86,Moreau05}.
We also proved that the supremum in~\eqref{e-disjoint} can
be restricted to pairs of {\em disjoint} extreme points $\nu,\pi$. Finally, the expression of the contraction rate as the last supremum 
in Theorem~\ref{th-opnorm} leads here to an abstract version of Dobrushin's ergodic coefficient, see Eqn~\eqref{e-dd2}
and Corollary~\ref{coro-noncommDobr} below.
\end{rem}
Let us recall the definition of Hilbert's projective metric.
\begin{defi}[\cite{birkhoff57}] \firstdef{Hilbert's projective metric} between two elements $x$ and $y$ of $\C_0$ is
\begin{align}
d_H(x,y)=\log(M(x/y)/m(x/y))
\enspace .
\label{e-def-hilbproj}
\end{align}
(The notation $M(\cdot/\cdot)$ was defined in~\eqref{a-eq4}.)
\end{defi}\index{metric!Hilbert's projective metric}
Consider a linear operator $T:\cX_1\rightarrow \cX_2$ such that $T(\C_1^0)\subset \C_2^0$.
Following~\cite{birkhoff57,Bushell73},  the \firstdef{projective diameter} of $T$ is defined as below:
$$
\Diam T=\sup \{d_H(T(x),T(y)): x,y\in \C_1^0\}.
$$\index{projective diameter}\index{oscillation ratio}
Birkhoff's contraction formula~\cite{birkhoff57,Bushell73} states that the oscillation ratio equals to the contraction ratio of $T$ and they are related to its projective diameter.
\begin{theo}[\cite{birkhoff57,Bushell73}]\label{th-birkho}
$$
\sup_{x,y\in \C_1^0} \frac{\omega(T(x)/T(y))}{\omega(x/y)}=
\sup_{x,y\in \C_1^0} \frac{d_H(T(x),T(y))}{d_H(x,y)}= \tanh(\frac{\Diam T}{4}).$$
\end{theo}
 Following~\cite{ReebWolf2011}, we define the projective diameter of $T^\star$:
$$
\Diam T^\star=\sup \{d_H(T^\star(u),T^\star(v)):  u,v\in \C_2^\star\backslash 0\}.
$$
Note that $\Diam T=\Diam T^\star$. This is because
$$
\begin{array}{ll}
\displaystyle\sup_{x,y\in\C_1^0}  \frac{M(T(x)/T(y))}{m(T(x)/T(y))}
&=\displaystyle\sup_{x,y\in\C_1^0} \sup_{u,v\in \C_2^\star\backslash 0} \frac{\<u,T(x)>\<v,T(y)>}{\<u,T(y)>\<v,T(x)>}\\
&=
\displaystyle\sup_{u,v\in \C_2^\star\backslash 0 } \frac{M(T^ \star(u)/T^ \star(v))}{m(T^ \star(u)/T^ \star(v))}
\end{array}
$$
\begin{coro}[Compare with~\cite{ReebWolf2011}]\label{th-TdiamT}
Let $T:\cX_1 \to \cX_2$ 
be a bounded linear map such that $T(\unit_1)\in \mathbb{R} \unit_2$ and $T(\C_1^0)\subset \C_2^0$ , then:
$$
\othernorm{T^ \star}_H^ \star=\othernorm{T}_H\leq \tanh(\frac{\Diam T}{4})=\tanh(\frac{\Diam T^ \star}{4})
$$
\end{coro}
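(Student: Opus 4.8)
The plan is to deduce the corollary from Birkhoff's contraction formula (Theorem~\ref{th-birkho}) by recognizing $\othernorm{T}_H$ as a one-base-point, infinitesimal specialization of the oscillation ratio appearing there. Since the equality $\othernorm{T^\star}_H^\star=\othernorm{T}_H$ is already available (from the duality preceding Theorem~\ref{th-opnorm}) and the identity $\Diam T=\Diam T^\star$ was established just above the statement, the whole displayed chain reduces to the single inequality $\othernorm{T}_H\le\tanh(\Diam T/4)$.

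First I would record the elementary scaling behaviour of the oscillation under an additive perturbation of the unit. For any $z\in\cX_1$ and any $\epsilon>0$ small enough that $\unit_1+\epsilon z\in\C_1^0$ (possible since $\unit_1\in\C_1^0$), the definitions in~\eqref{a-eq4} give $M((\unit_1+\epsilon z)/\unit_1)=1+\epsilon M(z/\unit_1)$ and $m((\unit_1+\epsilon z)/\unit_1)=1+\epsilon m(z/\unit_1)$, whence $\omega((\unit_1+\epsilon z)/\unit_1)=\epsilon\,\omega(z/\unit_1)$. The same computation in $\cX_2$, applied to $T(\unit_1+\epsilon z)=T(\unit_1)+\epsilon T(z)$ and using $T(\unit_1)\in\R\unit_2$, yields $\omega(T(\unit_1+\epsilon z)/\unit_2)=\epsilon\,\omega(T(z)/\unit_2)$; when the normalization $T(\unit_1)=\unit_2$ holds, the left-hand oscillation may equally be read against $T(\unit_1)$, and in general the two readings differ only by the positive scalar relating $T(\unit_1)$ to $\unit_2$, which I would track explicitly.

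Combining these identities (taking $T(\unit_1)=\unit_2$ for definiteness), the ratio defining $\othernorm{T}_H$ is reproduced exactly by a two-base-point oscillation ratio:
\[
\frac{\omega(T(z)/\unit_2)}{\omega(z/\unit_1)}=\frac{\omega(T(\unit_1+\epsilon z)/T(\unit_1))}{\omega((\unit_1+\epsilon z)/\unit_1)},
\]
whose right-hand side is of the form $\omega(T(x)/T(y))/\omega(x/y)$ with $x=\unit_1+\epsilon z$ and $y=\unit_1$ both in $\C_1^0$. By Theorem~\ref{th-birkho} every such ratio is at most $\tanh(\Diam T/4)$, so taking the supremum over $z$ (equivalently, over $x\in\C_1^0$ with $y=\unit_1$ fixed, using the scale- and $\unit_1$-translation-invariance of the ratio) gives $\othernorm{T}_H\le\tanh(\Diam T/4)$. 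Feeding in the two equalities noted at the outset then produces the full statement.

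The main obstacle is conceptual rather than computational: one must justify that the fixed-base-point operator norm $\othernorm{T}_H$, in which the image oscillation is measured against $\unit_2$ while the source is measured against $\unit_1$, genuinely coincides with a restriction of Birkhoff's two-base-point oscillation ratio. This is precisely the Finsler/infinitesimal interpretation of $\omega(\cdot/\unit)$ as the derivative of $d_H$ at $\unit$, and the delicate point is the role of the hypothesis $T(\unit_1)\in\R\unit_2$: it is exactly what makes measuring the image oscillation at $\unit_2$ agree, up to the projective scalar, with measuring it at $T(\unit_1)$, so that Theorem~\ref{th-birkho} applies verbatim. Verifying that $\unit_1+\epsilon z$ stays in the open cone and that the supremum over directions $z$ faithfully recovers the supremum over $x\in\C_1^0$ are the remaining routine checks.
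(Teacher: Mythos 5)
Your proposal is correct and is essentially the paper's proof: the paper likewise reduces everything to the single inequality $\othernorm{T}_H\le\tanh(\Diam T/4)$ (the duality $\othernorm{T}_H=\othernorm{T^\star}_H^\star$ and the identity $\Diam T=\Diam T^\star$ having been established just before the corollary), and then observes that $\othernorm{T}_H$ equals the supremum of $\omega(T(x)/\unit_2)/\omega(x/\unit_1)$ over $x\in\C_1^0$, to which Birkhoff's formula (Theorem~\ref{th-birkho}) applies with $y=\unit_1$; your substitution $x=\unit_1+\epsilon z$ is just an explicit rendering of this restriction-to-the-open-cone step, which the paper performs implicitly via invariance of both oscillations under adding multiples of $\unit_1$. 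Even the one delicate point you flag --- the scalar $c>0$ with $T(\unit_1)=c\,\unit_2$, which enters as $\omega(T(x)/T(\unit_1))=c^{-1}\omega(T(x)/\unit_2)$ and is genuinely needed to be $1$ for the stated inequality --- is handled no more carefully in the paper, which tacitly takes $c=1$ exactly as you do ``for definiteness.''
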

\begin{proof}
 It is sufficient to prove the inequality. For this, note that
$$
\othernorm{T}_H=\sup_{\substack{x\in \cX_1\\ \omega(x/\unit_1)\neq 0}} \omega(T(x)/\unit_2)/\omega(x/\unit_1)=\sup_{\substack{x \in \C_1^0\\\omega(x/\unit_1)\neq 0}} \omega(T(x)/\unit_2)/\omega(x/\unit_1).
$$
Then we apply Birkhoff's contraction formula.
\end{proof}
\begin{rem}
Reeb et al.~\cite{ReebWolf2011} showed in a different way that
$$
\othernorm{T^\star}_H^\star\leq \tanh(\frac{\Diam T^ \star}{4})\enspace,
$$
in a finite dimensional setting.
Corollary~\ref{th-TdiamT} shows that as soon as the duality formula
$\othernorm{T^ \star}_H^ \star=\othernorm{T}_H$ is established, the latter inequality follows from Birkhoff's
contraction formula.
\end{rem}

\section{Application to Markov operators on cones and discrete time consensus dynamics}\label{sec-consensusoperator}
\if{
A classical result, which goes back to Doeblin and Dobrushin, characterizes
the Lipschitz constant of a Markov matrix acting on the space
of measures (i.e., a row stochastic matrix acting on the left), 
with respect to the total variation norm (see the discussion
in Section~\ref{sec-appclassiconsen} below). The same constant characterizes the contraction ratio with respect to the ``diameter'' (Hilbert's seminorm)
of the consensus system driven by this Markov matrix (i.e., a row
stochastic matrix acting on the right).
Markov operators on cones extend Markov matrices.
In this section, we extend to these abstract operators
a number of known properties of Markov matrices.}\fi

A bounded linear  map $T:\cX\to \cX$ is a {\em Markov operator}\index{operator!Markov operator} with respect to
a unit vector $\unit$ in the interior $\C^ 0$ of a closed convex pointed cone $\C\subset \cX$ if it satisfies the two following properties: 
\begin{itemize}
\item[(i)] $T$ is positive, i.e., $T(\C) \subset \C$.
\item[(ii)] $T$ preserves the unit element $\unit$, i.e., $T(\unit)=\unit$.
\end{itemize}

The case when $\othernorm{T}_H<1$ or equivalently $\othernorm{T^\star}_{H}^\star<1$
is of special interest; the following theorem
shows that the iterates of $T$ converge to a rank one projector with a rate bounded by $\|T\|_H$. 
\begin{theo}[Geometric convergence to consensus/invariant measure]\label{th-ex-con}
Let $T:\cX\rightarrow \cX$ be a Markov operator with respect to the unit element $\unit$.
If $\othernorm{T}_H<1$ or equivalently $\othernorm{T^\star}_{H}^\star<1$, then there is $\pi \in \pP(\unit)$ 
such that for all $x\in \cX$
\[
\|T^n(x)-\<\pi,x>\unit\|_T \leq (\othernorm{T}_H)^n \|x\|_H, 
\]
and for all $\mu \in \pP(\unit)$
\[
\|(T^\star)^n(\mu)-\pi \|_H^\star \leq (\othernorm{T}_H)^n.
\]
\end{theo}
\begin{proof}
The intersection $$\displaystyle\cap_{n} [m(T^n(x)/\unit),M(T^n(x)/\unit)]\subset \R$$ is
nonempty (as a non-increasing intersection of nonempty compact sets), and since $\othernorm{T}_H<1$ and
$$\omega(T^n(x)/\unit)\leq (\othernorm{T}_H)^n \omega(x/\unit),$$ this intersection must
be reduced to a real number $\{c(x)\}\subset \R$ depending on $x$, i.e.,
$$c(x)= \underset{n}{\cap} [m(T^n(x)/\unit),M(T^n(x)/\unit)]\enspace.$$
Thus for all $n\in \mathbb N$,
$$
-\omega(T^n(x)/\unit) \unit\leq T^n(x)-c(x)\unit\leq \omega(T^n(x)/\unit)\unit.
$$
Therefore by definition:
$$
\|T^n(x)-c(x)\unit \|_T\leq \omega(T^n(x)/\unit).
 \leq (\othernorm{T}_H)^n \othernorm{x}_H.
$$
It is immediate that:
$$
c(x)\unit=\lim_{n\rightarrow \infty} T^n(x)
$$
from which we deduce that $c:\cX\rightarrow \R$ is a continuous linear functional. Thus there is $\pi\in \cX^\star$ such that $c(x)=\<\pi,x>$.  Besides it is immediate that $\<\pi,\unit>=1$ and $\pi\in \C^\star$ because
$$
x\in \C \Rightarrow c(x)\unit \in \C\Rightarrow  c(x)\geq 0\Rightarrow \<\pi,x>\geq 0.
$$
Therefore $\pi\in \pP(\unit)$.
Finally for all $\mu \in \pP(\unit)$ and all $x\in \cX$ we have
$$
\begin{array}{ll}
\<(T^\star)^n(\mu)-\pi,x>&=\<\mu, T^n(x)-\<\pi,x>\unit>\\
&\leq \othernorm{\mu}_T^\star\othernorm{T^n(x)-\<\pi,x>\unit}_{T}\\
&\leq (\othernorm{T}_H)^n \|x\|_H.
\end{array}
$$
Hence
$$
\othernorm{(T^\star)^n(\mu)-\pi}_{H}^\star \leq (\othernorm{T}_H)^n.
$$
\end{proof}
A time invariant discrete time consensus system can be
described by
\begin{align}\label{a-xk+1Tk}
x_{k+1}=T(x_k),\quad k=1,2,\dots \enspace.
\end{align}
 The main concern of consensus theory is the convergence of the orbit $x_k$ to a 
{\em consensus state}, which is represented by a scalar multiple of the unit element $\unit$.
The dual system of~\eqref{a-xk+1Tk} represents a homogeneous discrete time Markov system:
\begin{align}\label{a-markovsystem}
 \pi_{k+1}=T^\star(\pi_k),\quad k=1,2,\dots\enspace.
\end{align}
One of the central issues in Markov chain study is the ergodic property, i.e., the convergence of the distribution $\pi_k$ to an {\em invariant measure}, given by a fixed point of $T^\star$.
Theorem~\ref{th-ex-con} shows that if $\othernorm{T}_H<1$ or equivalently $\othernorm{T^\star}_H^\star<1$, then the consensus system~\eqref{a-xk+1Tk} is globally convergent and the homogeneous Markov chain~\eqref{a-markovsystem} is  ergodic. 

 A time-dependent consensus system is described by
\begin{align}\label{a-tmedeconsesys}
x_{k+1}=T_{k+1}(x_k),\quad k=1,2,\dots
\end{align}
where $\{T_k:k\geq 1\}$ is a sequence of Markov operators sharing a common unit element $\unit\in \C^ 0$. Then if there is an integer $p>0$ and
a constant $\alpha<1$ such that for all 
$i\in \mathbb N$
$$
\othernorm{T_{i+p}\dots T_{i+1}}_H\leq \alpha,
$$
then the same lines of proof of Theorem~\ref{th-ex-con} imply the existence of $\pi \in \pP(\unit)$ such that for all $\{x_k\}$ satisfying~\eqref{a-tmedeconsesys},
$$
\othernorm{x_{k} -\<\pi, x_{0}>\unit}_T\leq \alpha^{\lfloor{\frac{k}{p}}\rfloor} \othernorm{x_{0}}_H,
\quad n\in \mathbb N.
$$
Moreover, if $\{T_k:k\geq 1\}$ is a stationary ergodic random process, then the almost sure 
convergence of the orbits of~\eqref{a-tmedeconsesys} to a consensus state
can be deduced by showing that $$\mathbb E[\log \othernorm{T_{1+p}\dots T_{1}}_H]<0$$ for some  $p>0$, see Bougerol~\cite{Bougerol93}.
The ergodicity of a inhomogeneous Markov chain can be studied in a dual approach. 
Hence,
in Markov chain and consensus applications, a central issue is  to compute
the operator norm $\|T\|_H$ of a Markov operator $T$.

A direct application of Theorem~\ref{th-opnorm} leads to following
characterization of the operator norm. 
\begin{theo}[Abstract Dobrushin's ergodicity coefficient]\label{theo-markTH}
 Let $T: \cX \to \cX$ be a Markov operator with respect to $\unit$. Then,
\[
\|T\|_H = \|T^\star\|_H^\star =1-\inf_{\substack{\nu,\pi \in \operatorname{extr}\cP(\unit)\\ \nu\perp\pi}}\inf_{x\in[0, \unit]} \<\pi,T(x)>+\<\nu, T(\unit-x)>.
\]
\end{theo}
\begin{proof}
Since $T(\unit)=\unit$, we have: 
\begin{align*}
&\sup_{\substack{\nu,\pi \in \operatorname{extr}\cP(\unit)\\ \nu\perp\pi}} \sup_{ x\in[0, \unit]}
\<\nu-\pi,T(x)>\\
&\qquad =\sup_{\substack{\nu,\pi \in \operatorname{extr}\cP(\unit)\\ \nu\perp\pi}}\sup_{x\in[0, \unit]} 1-\<\pi,T(x)>-\<\nu, T(\unit-x)>.
\end{align*}
\end{proof}
\begin{example}
Let us specialize Theorem~\ref{theo-markTH} to the case 
of the standard positive cone of $\R^n$ (Example~\ref{rem1}). Then, a
Markov operator $\R^n\to \R^n$ is of the form $T(x)=Ax$,
where $A$ is a row-stochastic matrix. 
We get 
\begin{align*}
\delta(A)=1-\min_{i< j} \min_{I\subset\{1,\dots,n\}}(\sum_{k\in I} A_{ik}+\sum_{k\notin I}A_{jk})\enspace.
\end{align*}
This formula yields directly the explicit form of Dobrushin's ergodicity coefficient recalled in the introduction, 
\[\delta(A)=
1-\displaystyle\min_{i< j}\sum_{s=1}^n \min(A_{is},A_{js}).
\]
\end{example}
\begin{rem}\label{rem-tmedepenran}
Several results of linear consensus theory can be interpreted,
or proved, in terms of ergodicity coefficient.
Consider the time-variant linear consensus system:
\begin{align}\label{a-xk+1Akxk}
x_{k+1}=A_kx_k,\enspace k=1,2,\dots\enspace,
\end{align}
where $\{A_k\}$ is a sequence of stochastic matrices. Moreau~\cite{Moreau05} showed that if all the non-zero entries of the matrices $\{A_k\}$ are bounded from below by a positive constant $\alpha>0$
and if there is $p\in \mathbb N$ such that for all $i\in \mathbb N$ there is a node connected to all other nodes in the graph associated to the matrix $A_{i+p}\dots A_{i+1}$, then the system~\eqref{a-xk+1Akxk} is globally uniformly convergent. These two conditions imply exactly that there is a Doeblin state associated to the matrix $A_{i+p}\dots A_{i+1}$. The uniform bound $\alpha$ is to have an upper bound on the contraction rate, more precisely,  $$\delta({A_{i+p}\dots A_{i+1}})\leq 1-\alpha,\enspace\forall i=1,2,\dots$$
\end{rem}

\section{Applications to noncommutative Markov operators}\label{sec-applnoncomm}
In this section, we specialize the previous general results to a finite dimensional noncommutative space ($\cX=\sym_n$, $\C=\sym_n^+$ and $\unit= I_n$, Example~\ref{ex-SymnHilbert}).

A completely positive unital linear map $\Phi:\sym_n\rightarrow \sym_n$ is characterized by a set of matrices $\{V_1,\dots,V_m\}$ satisfying 
\begin{align}\label{a-ViViIn}
\sum_{i=1}^m V_i^ *V_i=I_n\enspace
\end{align}
such that the map $\Phi$ is given by:
\begin{align}\label{a-PhiX}
\Phi(X)=\sum_{i=1}^m V_i^ *XV_i,\quad \forall X\in \sym_n\enspace.
\end{align}The matrices $\{V_1,\dots,V_m\}$ are called \firstdef{Kraus operators}.\index{Kraus operator} It is clear that $\Phi:\sym_n\rightarrow \sym_n$ defines a Markov operator.
The dual operator of $\Phi$ is given by:
$$
\Psi(X)=\sum_{i=1}^m V_i XV_i^*,\enspace X\in \sym_n\enspace.
$$
It is a completely positive and trace-preserving map, called Kraus map.\index{Kraus map} The map $\Phi$ and $\Psi$ represent a purely quantum channel~\cite{sepulchre,ReebWolf2011}.
In particular, the adjoint map $\Psi$ is trace-preserving and acts 
on density matrices.
The operator norm of $\Phi: \sym_n/\R I_n \rightarrow \sym_n/\R I_n$ is the contraction ratio with respect to the diameter of the spectrum:
$$
\othernorm{\Phi}_H=\sup_{X\in \sym_n} \frac{\lambda_{\max}(\Phi(X))-\lambda_{\min}(\Phi(X))}{\lambda_{\max}(X)-\lambda_{\min}(X)}.
$$
The operator norm of the adjoint map $\Psi: \pP(I_n)\rightarrow \pP(I_n)$ is the contraction ratio with respect to the trace norm (the total variation distance):
$$
\othernorm{\Psi}_H^\star=\sup_{\rho_1,\rho_2\in \pP(I_n)}\frac{\othernorm{\Psi(\rho_1)-\Psi(\rho_2)}_1}{\othernorm{\rho_1-\rho_2}_1} .
$$
The values $\othernorm{\Phi}_H$ and $\othernorm{\Psi}_H^\star$ are
the noncommutative counterparts of
$\delta(\cdot)$.

Specializing Theorem~\ref{theo-markTH} to  Kraus maps, we 
obtain the noncommutative version of Dobrushin's ergodicity coefficient. 
\begin{coro}[Noncommutative Dobrushin's ergodicity coefficient]\label{coro-noncommDobr}
Let $\Phi$ be a completely positive unital linear map defined in~\eqref{a-PhiX}. 
Then,
 \begin{align}\label{a-PhiHPsiH}
\othernorm{\Phi}_H=\othernorm{\Psi}_H^\star=1-
\displaystyle\min_{\substack{u,v:u^*v=0\\u^*u=v^*v=1}}\min_{\substack{X=(x_1,\dots,x_n)\\XX^*=I_n}} \sum_{i=1}^ n \min \{u^*\Phi(x_ix_i^ *)u,v^*\Phi(x_ix_i^ *)v\}
\end{align}
\end{coro}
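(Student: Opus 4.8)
The plan is to specialize the abstract Dobrushin formula of Theorem~\ref{theo-markTH} to $\cX=\sym_n$, $\C=\sym_n^+$, $\unit=I_n$, and then to evaluate the inner infimum explicitly. First I would translate the abstract data via Remark~\ref{rem-xx*yy*}: the extreme points of $\pP(I_n)$ are the pure states $uu^*$ with $u\in\CC^n$, $u^*u=1$, and $uu^*\perp vv^*$ holds exactly when $u^*v=0$. Under the trace pairing $\<\mu,X>=\trace(\mu X)$ one has $\<uu^*,\Phi(x)>=u^*\Phi(x)u$, so Theorem~\ref{theo-markTH} becomes
$$\othernorm{\Phi}_H=\othernorm{\Psi}_H^\star=1-\inf_{\substack{u,v:\,u^*v=0\\u^*u=v^*v=1}}\gamma(u,v),\qquad \gamma(u,v):=\inf_{x\in[0,I_n]}\big(u^*\Phi(x)u+v^*\Phi(I_n-x)v\big).$$
It then remains to show that $\gamma(u,v)=\min_{XX^*=I_n}\sum_{i}\min\{u^*\Phi(x_ix_i^*)u,\,v^*\Phi(x_ix_i^*)v\}$, after which taking the infimum over the pairs $(u,v)$ gives~\eqref{a-PhiHPsiH}.

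The key point is that $x\mapsto u^*\Phi(x)u+v^*\Phi(I_n-x)v$ is affine on the compact convex order interval $[0,I_n]$, whose extreme points are precisely the orthogonal projections. Hence $\gamma(u,v)$ is attained at some projection $P$. Completing an orthonormal basis of the range of $P$ to an orthonormal basis $(x_i)$ of $\CC^n$ (equivalently, a unitary $X=(x_1,\dots,x_n)$ with $XX^*=I_n$), I may write $P=\sum_{i\in S}x_ix_i^*$ and $I_n-P=\sum_{i\notin S}x_ix_i^*$. Linearity of $\Phi$ then gives $\gamma(u,v)=\sum_{i\in S}u^*\Phi(x_ix_i^*)u+\sum_{i\notin S}v^*\Phi(x_ix_i^*)v$, which is at least $\sum_i\min\{u^*\Phi(x_ix_i^*)u,\,v^*\Phi(x_ix_i^*)v\}$, hence at least the minimum of this quantity over all orthonormal bases.

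For the reverse inequality I would start from an arbitrary orthonormal basis $(x_i)$ and any subset $S$: the projection $x=\sum_{i\in S}x_ix_i^*$ lies in $[0,I_n]$, so $\gamma(u,v)\le\sum_{i\in S}u^*\Phi(x_ix_i^*)u+\sum_{i\notin S}v^*\Phi(x_ix_i^*)v$. Selecting for each index $i$ the smaller of $u^*\Phi(x_ix_i^*)u$ and $v^*\Phi(x_ix_i^*)v$ yields $\gamma(u,v)\le\sum_i\min\{u^*\Phi(x_ix_i^*)u,\,v^*\Phi(x_ix_i^*)v\}$, and minimizing over all bases gives $\gamma(u,v)\le\min_{XX^*=I_n}\sum_i\min\{\dots\}$. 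Combined with the previous paragraph this is the desired equality; all infima are in fact minima by compactness of the feasible sets and continuity of $\gamma$.

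The main obstacle is the treatment of $\gamma(u,v)$: one must recognize that affineness forces the infimum to a vertex of $[0,I_n]$, identify these vertices as orthogonal projections, and exploit the fact that each projection is carried by an orthonormal basis so that $\Phi$-linearity turns the objective into a sum over that basis --- at which point the free choice of membership in $S$ produces the entrywise minimum. The remaining ingredients are purely a matter of translating the abstract pairing through Remark~\ref{rem-xx*yy*} together with the unital identity $\Phi(I_n)=I_n$ and the trace-preservation of $\Psi$.
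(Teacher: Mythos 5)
Your proof is correct and takes essentially the same route as the paper: specialize Theorem~\ref{theo-markTH} via Remark~\ref{rem-xx*yy*}, reduce the infimum over $[0,I_n]$ to its extreme points (the orthogonal projections, using the affineness noted in Remark~\ref{rk-reached}), and decompose each projection along an orthonormal basis so that the free choice of the index subset yields the entrywise minimum. The paper compresses the two inequalities you spell out into a single chain of equalities, but the content is identical.
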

\begin{proof}
It can be easily checked that 
$$
\extr [0,I_n]=\{P\in \sym_n:P^ 2=P\}.
$$
Hence, Theorem~\ref{theo-markTH} and Remark~\ref{rem-xx*yy*}
yield:
\begin{align*}
\othernorm{\Phi}_H&=\othernorm{\Psi}_H^\star=
1-\displaystyle\min_{\substack{u,v:u^*v=0\\u^*u=v^*v=1}}\min_{\substack{P^ 2=P}}u^*\Phi(I_n-P)u+v^*\Phi(P)v\\
&=1-\displaystyle\min_{\substack{u,v:u^*v=0\\u^*u=v^*v=1}}\min_{\substack{X=(x_1,\dots,x_n)\\XX^*=I_n}} \min_{J\subset\{1,\dots,n\}} \sum_{i\in J} u^*\Phi(x_ix_i^ *)u+ \sum_{i\notin J}v^*\Phi(x_ix_i^ *)v \\
\end{align*}
from which~\eqref{a-PhiHPsiH} follows.\qedhere
\end{proof}
\begin{rem}
For the noncommutative case, it is not evident whether more
%
effective characterization of the contraction rate exists. Note that the dual operator norm was studied in quantum information theory, see \cite{ReebWolf2011} and references therein. They provided a Birkhoff type upper bound (Corollary 9 in~\cite{ReebWolf2011}):
$$
\othernorm{\Psi}_{H}^ *\leq \tanh( \Diam \Psi/4)\enspace.
$$
The value $\Diam \Psi$ is not directly computable.
This upper bound is equal to 1 if and only if $\Diam \Psi=\infty$, which is satisfied if and only if there exist a pair of nonzero vectors $u,v\in \cC^n$  such that:
$$
\Span\{V_i u:1\leq i\leq m\}\neq \Span \{V_i v: 1\leq i\leq m\}.
$$
We next provide a
tighter, 
in fact necessary and sufficient, condition for the operator norm to be 1. 
\end{rem}
\begin{coro}\label{coro-nonco}
The following conditions are equivalent:
\begin{itemize}
\item[1.]$\othernorm{\Phi}_{H}=\othernorm{\Psi}_H^\star=1.$
\item[2.] There are nonzero vectors $u,v\in \cC^n$ such that
$$
\<V_i u,V_j v>=0, \enspace \forall i,j \in \{1,\dots,m\}.
$$
\item[3.] There is a  rank one matrix $Y\subset \cC^ {n\times n}$ such that
$$
\trace(V_i^*V_jY)=0,\enspace \forall i,j \in\{1,\cdots,m\}.
$$
\end{itemize}
\end{coro}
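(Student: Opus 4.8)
The plan is to derive all three equivalences directly from the noncommutative Dobrushin formula of Corollary~\ref{coro-noncommDobr}. Since $\Phi$ is positive, each quadratic form $u^*\Phi(x_ix_i^*)u$ and $v^*\Phi(x_ix_i^*)v$ appearing there is nonnegative, so the inner minimum in that formula is a sum of nonnegative quantities. The admissible set for $(u,v)$ (orthonormal pairs) and for $X$ (matrices with $XX^*=I_n$, i.e.\ unitaries, whose columns form an orthonormal basis) is compact and the objective is continuous, so the minimum is attained. Hence $\othernorm{\Phi}_H=1$ if and only if this minimum equals $0$, which, by nonnegativity of the terms, holds if and only if there exist orthonormal $u,v$ and an orthonormal basis $x_1,\dots,x_n$ with
\[
\min\{u^*\Phi(x_ix_i^*)u,\ v^*\Phi(x_ix_i^*)v\}=0\quad\text{for every }i.
\]

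The next step is to rewrite the quadratic forms via the Kraus operators. Expanding $\Phi$ gives $u^*\Phi(x_ix_i^*)u=\sum_{k=1}^m|\<x_i,V_ku>|^2$, and likewise for $v$, so that $u^*\Phi(x_ix_i^*)u=0$ is equivalent to $x_i\perp\Span\{V_ku:1\le k\le m\}$. Setting $U:=\Span\{V_ku\}$ and $W:=\Span\{V_kv\}$, condition~1 becomes the existence of orthonormal $u,v$ and an orthonormal basis each of whose vectors lies in $U^\perp$ or in $W^\perp$. I would then prove the key linear-algebra equivalence: such a basis exists if and only if $U\perp W$. For the forward direction I would split the index set into a partition $J_U\cup J_W$ with $x_i\in U^\perp$ for $i\in J_U$ and $x_i\in W^\perp$ for $i\in J_W$, observe that the spans of these two blocks are mutually orthogonal complements, and conclude $U\subseteq\Span\{x_i:i\in J_W\}$ and $W\subseteq\Span\{x_i:i\in J_U\}$, whence $U\perp W$. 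The converse is immediate: if $U\perp W$, concatenating orthonormal bases of $U$, of $W$, and of $(U+W)^\perp$ yields a basis of the required form. Since $U\perp W$ says precisely that $\<V_iu,V_jv>=u^*V_i^*V_jv=0$ for all $i,j$, this establishes $1\Leftrightarrow 2$, once I note that the orthonormality $u^*v=0$ required to invoke Corollary~\ref{coro-noncommDobr} is automatic: summing $\<V_ku,V_kv>$ over $k$ and using $\sum_kV_k^*V_k=I_n$ gives $u^*v=0$.

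Finally, $2\Leftrightarrow 3$ is a verbatim translation. A rank one matrix is exactly one of the form $Y=vu^*$ with $u,v$ nonzero, and then $\trace(V_i^*V_jY)=u^*V_i^*V_jv=\<V_iu,V_jv>$, so condition~3 applied to $Y=vu^*$ coincides with condition~2.

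I expect the main obstacle to be the forward direction of the linear-algebra equivalence, namely that the existence of an orthonormal basis adapted to $U^\perp$ and $W^\perp$ \emph{forces} $U\perp W$. The geometric content is that an orthonormal basis can be split compatibly with the two subspaces only when $U$ and $W$ are themselves orthogonal, and making this precise requires the orthogonal-complement bookkeeping sketched above rather than a one-line argument. The remaining ingredients are routine: unwinding the Kraus expansion, the nonnegativity and attainment observations, and the elementary rank-one identification.
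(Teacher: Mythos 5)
Your proof is correct and follows essentially the same route as the paper's: reduce via Corollary~\ref{coro-noncommDobr} to the condition that each basis vector $x_i$ annihilates $\Span\{V_ku\}$ or $\Span\{V_kv\}$, show this is equivalent to the two spans being orthogonal, and identify condition~3 with condition~2 via $Y=vu^*$. You merely supply details the paper states without proof — the partition argument forcing $U\perp W$, and the observation that $\sum_k V_k^*V_k=I_n$ makes the orthogonality $u^*v=0$ automatic — both of which check out.
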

\begin{proof}
From Corollary~\ref{coro-noncommDobr} we know that $\othernorm{\Phi}_{H}=1$ if and only if there exist
an orthonormal basis $\{x_1,\dots,x_n\}$  and two vectors
$u,v \in \cC^ n$ of norm 1 such that
$$
\sum_{i=1}^ n \min \{\sum_{j=1}^ m u^*V_j^ *x_ix_i^ *V_ju,\sum_{j=1}^m v^*V_j^ *x_ix_i^ *V_jv\}=0\enspace.
$$
This is equivalent to that for each $i\in\{1,\dots,n\}$, either 
$$
x_i^*V_ju=0,\enspace \forall j=1,\dots,m
$$is true,
or
$$
x_i^*V_jv=0,\enspace \forall j=1,\dots,m
$$
is true. This is equivalent to 
$$
\<V_i u,V_j v>=0, \enspace \forall i,j \in \{1,\dots,m\}\enspace.
$$
The equivalence between the second and the third condition is trivial by taking $Y=vu^*$.
\end{proof}
\begin{rem}\label{quantumclique}
The condition
appearing in item~2 of Corollary~\ref{coro-nonco} is 
equivalent to the positivity of the zero-error
capacity~\cite{Medeiros}, or
to the existence of a quantum
clique of cardinality $2$~\cite{Shor}. The latter
problem is known to be QMA$_1$ complete (proof of Theorem~3.2 in~\cite{Shor}).
Thus, Corollary~\ref{coro-nonco} relates the absence of contraction
to a known hard problem in quantum computing.
\end{rem}


We consider a time-invariant noncommutative consensus system:
\begin{align}\label{a-sysnonc}
X_{k+1}=\Phi(X_k),\enspace k=1,2,\dots
\end{align}
where $\Phi$ is a completely positive unital map. To study the convergence of such system, Sepulchre, Sarlette and Rouchon~\cite{sepulchre}
proposed to study the contraction ratio
$$
\alpha:=\sup_{X\succ 0} d_H(\Phi(X),I_n)/d_H(X,I_n)\enspace.
$$
They applied Birkhoff's contraction formula (Theorem~\ref{th-birkho}) to give an upper bound on the contraction ratio $\alpha$:
$$
\alpha \leq \tanh( \Diam \Phi/4)\enspace.
$$
The following theorem is a direct corollary of Nussbaum~\cite{nussbaum94}.
\begin{theo}(Corollary of~\cite[Thm2.3]{nussbaum94})
$$
\othernorm{\Phi}_H=\lim_{\epsilon\rightarrow 0^+} \big(\sup \{\frac{d_H(\Phi(X),I_n)}{d_H(X,I_n)}:0<d_H(X,I_n)\leq \epsilon\}\big),
$$
\end{theo}
By this theorem, it is clear that the contraction ratio used in~\cite{sepulchre} is an upper bound of the operator norm $\| \Phi\|_H$:
$$
\othernorm{\Phi}_H\leq \alpha\enspace.
$$
We next provide an algebraic characterization of the global convergence of system~\eqref{a-sysnonc}, based on the result established in Corollary~\ref{coro-nonco}. 
Let us consider a sequence of matrix subspaces defined as follows:
\begin{align*}
& \cH_{0}=\Span\{I_n\}\enspace,\\
& \cH_{k+1}=\Span\{V_i^ * XV_j:X\in \cH_k, i,j=1,\dots,m\}\enspace,\enspace k=0,1,\dots,\enspace
\end{align*}
\begin{lemma}\label{l-Hkfini}
 There is $k_0\leq n^2-1$ such that 
$$
\cH_{k_0+s}=\cH_{k_0},\enspace \forall s\in \mathbb N.
$$
\end{lemma}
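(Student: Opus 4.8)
The plan is to reduce everything to the single observation that the sequence of subspaces $(\cH_k)_{k\geq 0}$ is non-decreasing for inclusion, i.e.\ $\cH_k\subseteq \cH_{k+1}$ for every $k$. Once this is established, the lemma becomes a dimension count: each $\cH_k$ is a (complex) linear subspace of the space $\cC^{n\times n}$ of all $n\times n$ matrices, which has dimension $n^2$, so the integers $\dim \cH_k$ form a non-decreasing sequence bounded above by $n^2$ and hence are eventually constant. Letting $k_0$ be the least index at which $\dim \cH_{k_0}=\dim \cH_{k_0+1}$, monotonicity upgrades this equality of dimensions to an equality of spaces $\cH_{k_0}=\cH_{k_0+1}$, and the recursive definition of the sequence then propagates this to $\cH_{k_0+s}=\cH_{k_0}$ for all $s$.

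First I would prove monotonicity by induction on $k$. For the base case $\cH_0\subseteq \cH_1$, the crucial ingredient is the unital condition \eqref{a-ViViIn}: writing $I_n=\sum_{i=1}^m V_i^*V_i=\sum_{i=1}^m V_i^* I_n V_i$ exhibits $I_n$ as a linear combination of the generators $V_i^* I_n V_j$ of $\cH_1$, so $\cH_0=\Span\{I_n\}\subseteq \cH_1$. For the inductive step, assume $\cH_{k-1}\subseteq \cH_k$. Every generator of $\cH_k$ has the form $V_i^* X V_j$ with $X\in \cH_{k-1}$; since $X\in \cH_{k-1}\subseteq \cH_k$, this same element is one of the generators $V_i^* Y V_j$ with $Y\in \cH_k$ that span $\cH_{k+1}$. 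Hence every generator of $\cH_k$ lies in $\cH_{k+1}$, giving $\cH_k\subseteq \cH_{k+1}$.

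With monotonicity in hand I would run the dimension count. By minimality of $k_0$, the inclusions $\cH_0\subsetneq \cH_1\subsetneq \cdots \subsetneq \cH_{k_0}$ are strict, so $\dim \cH_{k_0}\geq \dim \cH_0+k_0=1+k_0$; since $\dim \cH_{k_0}\leq n^2$ this yields $k_0\leq n^2-1$. Finally, to propagate the equality I would induct on $s$: since $\cH_{k+1}$ depends on $\cH_k$ only through $\cH_{k+1}=\Span\{V_i^*YV_j : Y\in \cH_k,\ i,j\}$, the hypothesis $\cH_{k_0+s}=\cH_{k_0}$ immediately gives $\cH_{k_0+s+1}=\cH_{k_0+1}=\cH_{k_0}$. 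The only genuinely delicate point is the base case of the monotonicity induction, which is exactly where the unital hypothesis \eqref{a-ViViIn} must be invoked; every other step is formal.
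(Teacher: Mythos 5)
Your proof is correct and takes essentially the same route as the paper's: monotonicity $\cH_k\subseteq\cH_{k+1}$ deduced from the unital condition~\eqref{a-ViViIn}, a dimension count in the $n^2$-dimensional space $\cC^{n\times n}$, and propagation of the first stabilization $\cH_{k_0}=\cH_{k_0+1}$ to all later indices. You merely spell out details the paper leaves implicit, namely the base case $I_n=\sum_i V_i^*I_nV_i\in\cH_1$ and the estimate $\dim\cH_{k_0}\geq k_0+1$ yielding $k_0\leq n^2-1$.
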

\begin{proof}
It follows from~\eqref{a-ViViIn} that $\cH_{k+1}\supseteq \cH_k$ for all $k\in \mathbb N$. 
Besides, if for some $k_0\in \bN$ such that
$$
\cH_{k_0+1}=\cH_{k_0}\enspace,
$$
then
$$
\cH_{k_0+s}=\cH_{k_0},\enspace \forall s\in \mathbb N.
$$
This property also implies that if for some $k_0\in \bN$
$$
\cH_{k_0+1}\neq \cH_{k_0}\enspace,
$$
then
$$
\cH_{k_0-s+1}\neq \cH_{k_0-s}\enspace,\forall 1\leq s\leq k_0\enspace.
$$
Since the dimension of $\cH_k$ can not exceed $n^2$, the case
$$
\cH_{k_0+1}\neq \cH_{k_0}\enspace,
$$
can not happen more than $n^2$ times. 
\end{proof}
For all $k\in \bN$, let $\cG_k$ be the orthogonal complement of $\cH_k$.
Then there is $k_0\leq n^ 2-1$ such that
\begin{align}\label{a-Gk}
\cG_{k}\supseteq \cG_{k+1},\enspace \forall k\in \bN;\enspace \enspace \cG_{k_0}=\cG_{k_0+s}, \enspace \forall s\in \mathbb{N}
\end{align}

\begin{theo}\label{th-globalconv}
The following conditions are equivalent:
\begin{itemize}
\item[(1)] There exists $k$ such that $\othernorm{\Phi^{k}}_{H}<1$.
\item[(2)] Every orbit of the system~\eqref{a-sysnonc} converges to an equilibrium co-linear to $I_n$.
\item[(3)] The subspace $ \displaystyle\cap_{k}\cG_{k}$ does not contain a rank one matrix.
\item[(4)] There exists $k_0\leq n^2-1$ such that $\othernorm{\Phi^{k_0}}_{H}<1$.
\end{itemize}
\end{theo}
\begin{proof}

$(1)\Rightarrow (2)$: We apply Theorem~\ref{th-ex-con} to the application $\Phi^ k$.

$(2)\Rightarrow (1)$:
Consider the quotient real linear space $\cW:= \sym_n/\R I_n$.
Since $\Phi(I_n)=I_n$, $\Phi$ yields a quotient linear map $\cW \mapsto \cW$.
We already observed in~\eqref{e-def-opH}
that $\othernorm{\Phi}_H$ is the operator
norm induced by the norm $\othernorm{\cdot}_H$ on $\cW$. 
It follows that $\othernorm{\Phi_1\Phi_2}_H\leq\othernorm{\Phi_1}_H
\othernorm{\Phi_2}_H$ holds for all linear maps $\Phi_1,\Phi_2: \cW\to\cW$,
and so, by Fekete's subadditive lemma,
\[
\inf_{k\geq 1} \othernorm{\Phi^ k}_H^ {1/k}=\lim_{k\rightarrow +\infty} \othernorm{\Phi^k}_H^ {1/k}
\]
Observe also that $\othernorm{\Phi}_H\leq 1$, so that
$\othernorm{\Phi^ k}_H \leq 1$ holds for all $k\geq 1$.
Then, if (1) is not true, we deduce
that
\begin{align}
\lim_{k\rightarrow +\infty} \othernorm{\Phi^k}_H^ {1/k}  = 1
 \enspace .
\label{e-fekete}
\end{align}
Now, for any real normed vector space $(\cV,\|\cdot\|_\cV)$, let $\cV_\CC= \cV+i\cV$
denote the complexification of $\cV$, and for any $\R$-linear self-map $T$
of $\cV$, let $T_\CC$ denote the complexification of $T$, so that
$T_\CC(x+iy)=T(x)+iT(y)$ for all $x,y\in\cV$. Recall that $\cV_\CC$ can
be equipped with the norm
\[
\|x+iy\|_{\cV_\CC} = \sup_{0\leq \theta\leq 2\pi} \|x\cos\theta -y\sin \theta\|_\cV
\]
and that the operator norm of $T$ induced by the norm $\|\cdot\|_\cV$ on $\cV$, 
denoted by $\|T\|_\cV$, as well as the operator norm of $T_\CC$ induced
by $\|\cdot\|_{\cV_\CC}$ on $\cV_\CC$, denoted by $\|T_\CC\|_{\cV_\CC}$, coincide,
\begin{align}
\|T_\CC\|_{\cV_\CC}=\|T\|_\cV \enspace .
\label{extension}
\end{align}
Consider in particular $(S_n)_\CC=S_n +iS_n\simeq \CC^{n\times n}$,
observe that $(S_n/\R I_n)_\CC \simeq \CC^{n\times n}/\CC I_n$.
It follows from~\eqref{extension} that
\[
\|\Phi_\CC^k \|_{\cW_\CC} = \|\Phi^k\|_{\cW} = \|\Phi^k\|_{H}
\]
holds for all $k$, and so, by~\eqref{e-fekete},
\begin{align}
\lim_{k\to \infty} \|\Phi_\CC^k \|^{1/k}_{\cW_\CC} = 1 \enspace .
\label{e-pregelfand}
\end{align}
By Gelfand's formula, the left-hand side of~\eqref{e-pregelfand}
is the spectral radius of the $\CC$-linear map
$\Phi_\CC:\cW_\CC \to \cW_{\CC}$. Hence, $\Phi_\CC$ has
an eigenvalue on the unit circle, meaning 
that there exists $\theta\in[0,2\pi)$, $X,Y \in S_n$, with $X+iY\not\in \CC I_n$, such that 
\[
\Phi(X+iY) - e^{i\theta} (X+iY) \in \CC I_n \enspace ,
\]
and so
\[
\Phi^k(X+iY) - e^{ik\theta}(X+iY)\in \CC I_n \enspace,
\]
for all $k\geq 1$. Identifying the real and imaginary parts, we get
$\Phi^k(X)= \cos (k\theta) X  -\sin(k\theta) Y + \alpha_k I_n$
and $\Phi^k(Y)= \sin(k\theta) X + \cos(k\theta) Y + \beta_k I_n$,
for some $\alpha _k, \beta_k\in \R$. 
Observe that since $X+iY \not \in \CC I_n$, we have $X,Y\not\in \R I_n$.
It follows that the orbit $(\Phi^k(X))_{k\geq 1}$ does not converge to a scalar multiple of $I_n$, contradicting (2).

$(3)\Leftrightarrow (1)$:
Note that for all $k\in \mathbb N$,
$$
\Phi^k(X)=\sum_{i_1,\dots,i_k} V_{i_k}^*\dots V_{i_1}^*XV_{i_1}\dots V_{i_k}.
$$
By Corollary~\ref{coro-nonco}, we know that $\othernorm{\Phi^k}_H=1$ if and only if the subspace $\cG_k$ contains a a rank one matrix.
Therefore , $\othernorm{\Phi^ k}_H=1$ for all $k\in \mathbb N$ if and only if the subspace $ \displaystyle\cap_{k}\cG_{k}$ contains a rank one matrix.

$(3)\Rightarrow (4)$:
By~\eqref{a-Gk}, there is $k_0 \leq n^2-1$ such that $\cG_{k_0}=\cap_{k}\cG_{k}$. It follows that if (3) is true then there is $k_0\leq n^ 2-1$ such that 
$\cG_{k_0}$ does not contain a rank one matrix. Then by Corollary~\ref{coro-nonco} we deduce that $\othernorm{\Phi^{k_0}}<1$ if (3) is true.
\end{proof}

In a dual way, the above analysis also applies to the ergodicity study of noncommutative Markov chain given by:
\begin{align}\label{a-markovnonco}
\Pi_{k+1}=\Psi(\Pi_k),\enspace k=1,2,\dots
\end{align}
Below is a dual version of Theorem~\ref{th-globalconv}.
\begin{theo}\label{th-markerg}
 The following conditions are equivalent:
\begin{itemize}
\item[(1)] There exists $k$ such that $\othernorm{\Psi^{k}}_{H}^\star<1$.
\item[(2)] The Markov chain~\eqref{a-markovnonco} converges to a unique invariant measure regardless of initial distribution.
\item[(3)] The subspace $ \displaystyle\cap_{k}\cG_{k}$ does not contain a rank one matrix.
\item[(4)] There exists $k_0\leq n^2-1$ such that $\othernorm{\Psi^{k_0}}_{H}^\star<1$.
\end{itemize}
\end{theo}


\begin{rem}
 A sufficient condition for the global convergence of the noncommutative consensus system~\eqref{a-sysnonc}
or equivalently, the ergodicity of the noncommutative Markov chain~\eqref{a-markovnonco} would be that there is $k_0\leq n^2-1$ such that
$$
\cH_{k_0}=\cC^ {n\times n}.
$$
Thus, checking the global convergence appears to be more tractable
than checking the one step contraction (compare this characterization
with the one of Corollary~\ref{coro-nonco}). 
\end{rem}

 \bibliographystyle{alpha}
\bibliography{biblio}

\begin{thebibliography}{BHOT05}

\bibitem[AB99]{aliprantis}
C.~D. Aliprantis and K.~C. Border.
\newblock {\em Infinite Dimensional Analysis. A Hitchiker's Guide}.
\newblock Springer, 1999.

\bibitem[AB09]{AngeliBliman}
D.~Angeli and P.-A. Bliman.
\newblock Convergence speed of unsteady distributed consensus: decay estimate
  along the settling spanning-trees.
\newblock {\em SIAM J. Control Optim.}, 48(1):1--32, 2009.

\bibitem[Alf71]{Alfsen}
E.~M. Alfsen.
\newblock {\em Compact convex sets and boundary integrals}.
\newblock Springer-Verlag, New York, 1971.
\newblock Ergebnisse der Mathematik und ihrer Grenzgebiete, Band 57.

\bibitem[BGPS06]{Boydrand06}
S.~Boyd, A.~Ghosh, B.~Prabhakar, and D.~Shah.
\newblock Randomized gossip algorithms.
\newblock {\em IEEE Trans. Inform. Theory}, 52(6):2508--2530, 2006.

\bibitem[BHOT05]{Blondel05convergencein}
V.~D. Blondel, J.~M. Hendrickx, A.~Olshevsky, and J.~N. Tsitsiklis.
\newblock Convergence in multiagent coordination, consensus, and flocking.
\newblock In {\em Proceedings of the joint 44th IEEE Conference on Decision and
  Control and European Control Conference}, pages 2996--3000. IEEE, 2005.

\bibitem[Bir57]{birkhoff57}
G.~Birkhoff.
\newblock Extensions of {J}entzsch's theorem.
\newblock {\em Trans. Amer. Math. Soc.}, 85:219--227, 1957.

\bibitem[Bou93]{Bougerol93}
Ph. Bougerol.
\newblock Kalman filtering with random coefficients and contractions.
\newblock {\em SIAM J. Control Optim.}, 31(4):942--959, 1993.

\bibitem[BS08]{Shor}
S.~Beigi and P.~W. Shor.
\newblock On the complexity of computing zero-error and holevo capacity of
  quantum channels.
\newblock arxiv:0709.2090v3, 2008.

\bibitem[BT89]{Bertsekas89}
D.~P. Bertsekas and J.~N. Tsitsiklis.
\newblock {\em Parallel and distributed computation: numerical methods}.
\newblock Prentice-Hall, Inc., Upper Saddle River, NJ, USA, 1989.

\bibitem[Bus73]{Bushell73}
P.~J. Bushell.
\newblock Hilbert's metric and positive contraction mappings in a {B}anach
  space.
\newblock {\em Arch. Rational Mech. Anal.}, 52:330--338, 1973.

\bibitem[Con90]{Conway90FA}
J.~B. Conway.
\newblock {\em A course in functional analysis}, volume~96 of {\em Graduate
  Texts in Mathematics}.
\newblock Springer-Verlag, New York, second edition, 1990.

\bibitem[CSM05]{SpeilmanMorse}
M.~Cao, D.~A. Spielman, and A.~S. Morse.
\newblock A lower bound on convergence of a distributed network consensus
  algorithm.
\newblock In {\em Proc. of the joint 44th IEEE Conference on Decision and
  Control and European Control Conference}, pages 2356--2361. IEEE, 2005.

\bibitem[Dob56]{Dobrushin56}
R.~Dobrushin.
\newblock Central limit theorem for non-stationary {M}arkov chains. {I}.
\newblock {\em Teor. Veroyatnost. i Primenen.}, 1:72--89, 1956.

\bibitem[Ell64]{Ellis}
A.~J. Ellis.
\newblock The duality of partially ordered normed linear spaces.
\newblock {\em J. London Math. Soc.}, 39:730--744, 1964.

\bibitem[EN95]{EvesonNuss95}
S.~P. Eveson and R.~D. Nussbaum.
\newblock An elementary proof of the {B}irkhoff-{H}opf theorem.
\newblock {\em Math. Proc. Cambridge Philos. Soc.}, 117(1):31--55, 1995.

\bibitem[GG04]{arxiv1}
S.~Gaubert and J.~Gunawardena.
\newblock The {P}erron-{F}robenius theorem for homogeneous, monotone functions.
\newblock {\em Trans. of AMS}, 356(12):4931--4950, 2004.

\bibitem[Hir89]{Hirsch89}
M.~W. Hirsch.
\newblock Convergent activation dynamics in continuous time networks.
\newblock {\em Neural Networks}, 2(5):331--349, 1989.

\bibitem[Hop63]{hopf}
E.~Hopf.
\newblock An inequality for positive linear integral operators.
\newblock {\em Journal of Mathematics and Mechanics}, 12(5):683--692, 1963.

\bibitem[KP82]{Kohlberg82}
E.~Kohlberg and J.~W. Pratt.
\newblock The contraction mapping approach to the {P}erron-{F}robenius theory:
  why {H}ilbert's metric?
\newblock {\em Math. Oper. Res.}, 7(2):198--210, 1982.

\bibitem[LPW09]{PeresLevin}
D.~A. Levin, Y.~Peres, and E.~L. Wilmer.
\newblock {\em Markov chains and mixing times}.
\newblock American Mathematical Society, Providence, RI, 2009.
\newblock With a chapter by James G. Propp and David B. Wilson.

\bibitem[MA05]{Medeiros}
R.~A.~C. Medeiros and F.~M.~De Assis.
\newblock Quantum zero-error capacity.
\newblock {\em Int. J. Quanum Inform.}, 03:135, 2005.

\bibitem[Mor05]{Moreau05}
L.~Moreau.
\newblock Stability of multiagent systems with time-dependent communication
  links.
\newblock {\em IEEE Trans. Automat. Control}, 50(2):169--182, 2005.

\bibitem[Muk13]{mukhamedov}
F.~Mukhamedov.
\newblock The {D}obrushin ergodicity coefficient and the ergodicity of
  noncommutative {M}arkov chains.
\newblock {\em J. Math. Anal. Appl.}, 408(1):364--373, 2013.

\bibitem[Nag74]{Nagel74}
R.~J. Nagel.
\newblock Order unit and base norm spaces.
\newblock In {\em Foundations of quantum mechanics and ordered linear spaces
  ({A}dvanced {S}tudy {I}nst., {M}arburg, 1973)}, pages 23--29. Lecture Notes
  in Phys., Vol. 29. Springer, Berlin, 1974.

\bibitem[Nus88]{nussbaum88}
R.~D. Nussbaum.
\newblock Hilbert's projective metric and iterated nonlinear maps.
\newblock {\em Mem. Amer. Math. Soc.}, 75(391):iv+137, 1988.

\bibitem[Nus94]{nussbaum94}
Roger~D. Nussbaum.
\newblock Finsler structures for the part metric and {H}ilbert's projective
  metric and applications to ordinary differential equations.
\newblock {\em Differential Integral Equations}, 7(5-6):1649--1707, 1994.

\bibitem[OT09]{OlshevskyTsitsiklis}
A.~Olshevsky and J.~N. Tsitsiklis.
\newblock Convergence speed in distributed consensus and averaging.
\newblock {\em SIAM J. Control Optim.}, 48(1):33--55, 2009.

\bibitem[RKW11]{ReebWolf2011}
D.~Reeb, M.~J. Kastoryano, and M.~M. Wolf.
\newblock Hilbert's projective metric in quantum information theory.
\newblock {\em J. Math. Phys.}, 52(8):082201, 33, 2011.

\bibitem[SSR10]{sepulchre}
R.~Sepulchre, A.~Sarlette, and P.~Rouchon.
\newblock Consensus in noncommutative spaces.
\newblock In {\em {Proceedings of the 49th IEEE Conference on Decision and
  Control}}, pages 6596--6601, Atlanta, USA, Dec 2010.

\bibitem[TBA86]{Tsitsiklis86}
J.~N. Tsitsiklis, D.~P. Bertsekas, and M.~Athans.
\newblock Distributed asynchronous deterministic and stochastic gradient
  optimization algorithms.
\newblock {\em IEEE Trans. Automat. Control}, 31(9):803--812, 1986.

\end{thebibliography}
\end{document}